\documentclass[11pt]{amsart}
\usepackage{color ,epstopdf ,amssymb ,graphicx ,geometry,verbatim}     
\usepackage{longtable,verbatim}
           
\geometry{a4paper}                   
\DeclareGraphicsRule{.tif}{png}{.png}{`convert #1 `dirname #1`/`basename #1 .tif`.png}

\newtheorem{proposition}{Proposition}
\newtheorem{theorem}{Theorem}
\newtheorem{problem}{Problem}
\newtheorem{corollary}{Corollary}

\title{Core-free, rank two coset geometries from edge-transitive bipartite graphs}
\author{Julie De Saedeleer} \address{Universit\'e Libre de Bruxelles
D\'epartement de Math\'ematiques - C.P.216,
Boulevard du Triomphe,
B-1050 Bruxelles} \email[J.~De Saedeleer]{judesaed@ulb.ac.be}
\author{Dimitri Leemans} \address{Universit\'e Libre de Bruxelles
D\'epartement de Math\'ematiques - C.P.216,
Boulevard du Triomphe,
B-1050 Bruxelles}
\author{Mark Mixer} \address{Universit\'e Libre de Bruxelles
D\'epartement de Math\'ematiques - C.P.216,
Boulevard du Triomphe,
B-1050 Bruxelles}
\author{Toma\v z Pisanski} \address{Slovenija University of Ljubljana, 
In\v stitut za matematiko,  fiziko in mehaniko
Jadranska 19,
Ljubljana}

\date{\today}                                          
\begin{document}
\maketitle

\begin{abstract}
It is known that the Levi graph of any rank two coset geometry is an edge-transitive graph, and thus coset geometries can be used to construct many edge transitive graphs.  In this paper, we consider the reverse direction.  Starting from edge-transitive graphs, we construct all associated core-free, rank two coset geometries.  
In particular, we focus on 3-valent and 4-valent graphs, and are able to construct coset geometries arising from these graphs.
We summarize many properties of these coset geometries in a sequence of tables; in the 4-valent case we restrict to graphs that have relatively small vertex-stabilizers.

\end{abstract}

MSC: 51A10, 51E30, 20B25, 05B20, 05C62

Keywords: incidence geometry,  bipartite graph, coset geometry, core-free geometry.

\section{Introduction}

Most of the time, people working with coset geometries are either analyzing a particular example of a geometry, or taking a group (or a family of groups) with the idea of classifying all geometries, satisfying a set of axioms, where the group has a flag-transitive action \cite{BCDL2001,DSL10}.  In this paper we use these ideas in the reverse direction. 
Namely, instead of considering flag-transitive incidence geometries arising from a given group, we use lists of edge-transitive graphs, and study flag-transitive incidence geometries arising from subgroups of their automorphism groups. In particular, we are interested in finding two-transitive geometries.  Our computations were
 made on well-known families of cubic and quartic bipartite graphs. Our approach uses the well-established pathway that was initiated in the 1950's and 1960's 
 by Jacques Tits~\cite{Tit54,Tit62a}.

This paper addresses multiple fields of mathematics including: group theory, geometry, algebraic graph theory, and combinatorics.  For completeness, we recall several known results in each of these fields.  The paper is organized as follows. 
First we give  definitions and notation for groups, graphs, and geometries.
Subsequently, we provide  background results for groups, graphs, and geometries.
We also give main observations about flag-transitive incidence geometries and coset geometries.  Finally, we provide explanations of our computations, and give some observations regarding the information in our tables, with a link to a web page of data and results.

\subsection{Groups.}
We recall some basic notions about groups and give the definitions needed to understand this paper.

The action of a group $G$ on a set $X$ is called \textit{transitive} if there is only one orbit; it is called \textit{primitive} if it is transitive and there is no non-trivial partition of $X$ preserved by the action.
The action is \textit{doubly transitive} if any ordered pair of distinct elements can be mapped pointwise on any other ordered pair by an element of $G$. 

An action is called \textit{faithful} if for any two distinct $g$, $h$ in $G$ there exists an $x$ in $X$ such that $gx \neq hx$; or equivalently, if for any $g \neq e$ in $G$ there exists an $x$ in $X$ such that $gx \neq x$. Intuitively, different elements of $G$ induce different permutations of $X$.

We denote the stabilizer of an element $p$ of $X$ by $Stab(G,p)$, which is the subgroup of $G$ that fixes $p$.  A \textit{maximal subgroup} of a group is defined as a proper subgroup such that there 
is no other proper subgroup containing it. 

\subsection{Bipartite graphs and their symmetry.}

All graphs in this paper are simple, i.e. without loops or multiple edges. Let $X$ be a graph and $G$ a 
subgroup of its automorphism group. We consider each edge of
$X$ being composed of two opposite \textit{arcs} (sometimes called \textit{darts}, \textit{half-edges}, or \textit{semi-edges}).
A \textit{$k$-arc} is an ordered $(k+1)$-tuple $(\alpha_0, ... ,\alpha_k)$ of vertices such that 
      $\{\alpha_{i-1},\alpha_{i} \}$ is an edge of $X$ for all $i=1,...,k$ and $\alpha_{j-1} 
      \neq \alpha_{j+1}$ for all $j=1,...,k-1$. (see~\cite{Big93}); note that a 1-arc is the same as an arc.

We now give the following definitions.
\begin{enumerate}
\item If the group $G$ acts transitively on the set of edges then we call $X$ a \textit{$G$-edge transitive graph}.
\item If the group $G$ acts transitively on the set of arcs then we call $X$ a \textit{$G$-arc transitive graph}.
\item $X$ is \textit{$G$-$k$-arc transitive} if $G$ acts transitively on $k$-arcs, but not on the $k+1$-arcs (see~\cite{Big93}).
\item $X$ is \textit{$G$-half-arc transitive} if $G$ acts transitively on the vertices and on the edges, but not on the arcs (see~\cite{Bou70,Mar98}).
\item $X$ is \textit{$G$-semi-symmetric} if $G$ acts transitively on arcs but not transitively on vertices (see~\cite{MPSW2007}).
\end{enumerate}

Note that our definition of a semi-symmetric graph is more general than usual. Namely, we do not require that a semi-symmetric graph be regular. Also note that saying a graph $X$ is $G$-arc-transitive is the same as saying that $X$ is $G$-$k$-arc-transitive for some $k \geq 1$. 
The notation of $G$-$k$-arc-transitive, is the original notation, it also appears noted as $(G,k)$-arc-transitive.

Let $X$ be a graph with a given vertex coloring.  We define three groups of automorphisms of $X$.

  The \textit{full automorphism group}, $Aut(X)$ consists of all automorphisms of $X$.
The  group of \textit{color respecting automorphisms}, $Aut_O(X)$, consists of automorphisms of $X$ which preserve the partition of the vertices of $X$ into color classes. The  group of \textit{color preserving automorphisms}, $Aut_o(X)$, consists of automorphisms of $X$ such that a vertex and its image are in the same color class.

An automorphism of a bipartite graph $X$ with a given black-and-white coloring that maps black vertices to white vertices is called a \emph{duality}.  A duality of order 2 is called \textit{polarity}. If a bipartite graph admits a duality, then it is called \emph{self-dual}; similarly if it admits a polarity, then it is called \emph{self-polar}.  

Observe that similar concepts can be defined for edge colored graphs.  

However, in this paper we only consider bipartite graphs with a given (black-and-white) vertex coloring.

\subsection{Rank two incidence and coset geometries.}
The theory of incidence geometries for general rank has existed for about half a century (see~\cite{Bue86,Bue95a,Bue95,BCD96a,BP95,Lee97c, Tit80}). 
However, we exclusively consider rank two geometries and adjust the language accordingly.

A rank two \emph{incidence geometry} $\Psi = (P,L,I)$ is a structure consisting of a set $P$ whose elements are called  \emph{points}, a set $L$ whose elements are called 
\emph{lines} and an \emph{incidence relation}  $I$ between points and lines. We model such a geometry using a bipartite graph with a given black and white coloring.  Black vertices correspond to points and white vertices correspond to lines of the geometry. This graph is called the \emph{incidence graph} or \emph{Levi graph} of the geometry (see~\cite{Lev42}, pg 5). 
By interchanging the roles of points and lines in $\Psi$ we obtain the \textit{dual geometry} $\Psi^* = (L,P,I)$. This is equivalent to reversing the colors in the 2-coloring of the Levi graph.  The edges of this graph are called \emph{chambers}~\cite{Tit62a} or \emph{flags}~\cite{MS2002} of the geometry.   A morphism between two geometries maps points to points, lines to lines, and flags to flags.  An isomorphism is a bijective morphism. An isomorphism
of $\Psi$ to itself is called an automorphism, and the set of automorphisms form a group, which we denote $Aut_o(\Psi)$. We point out here that this is not the standard notation for incidence geometries, but it allows us to easily go back and forth between automorphisms of graphs and rank two geometries.  An isomorphism of $\Psi$ to its dual geometry $\Psi^*$ is called a duality.
 Finally, an incidence geometry is \textit{connected} if its Levi graph is connected. 

A remarkable concept, whose power was discovered by Tits in the 1950's, comes from the requirement that $Aut_o(\Psi)$ acts transitively on the flags of the geometry.  We call a geometry $\Psi$ \textit{point-, line-, or flag-transitive} provided that $Aut_o(\Psi)$ acts transitively respectively on points, lines, or flags of the geometry (see~\cite{Deh94}).  It may happen that the group of automorphisms and dualities acts transitively on flags but the geometry itself is not flag-transitive. In this case we say, that it is \textit{weakly flag-transitive}~\cite{MarusicPisanski}.

Flag-transitive incidence geometries arise from groups and their coset geometries.  A rank two \textit{coset geometry} $\Gamma=(G:G_0,G_1)$ is defined by a group $G$ and left cosets of two of its subgroups $G_0$ and $G_1$.  Given a coset geometry $\Gamma$, we define its \textit{underlying incidence geometry} $\Psi$ as follows: points of $\Psi$ are left-cosets of $G_0$ in $G$ and lines are left-cosets of $G_1$ in $G$.
A pair of cosets $aG_0,bG_1$ is incident if and only if $aG_0 \cap bG_1 \neq \emptyset$; also the Levi graph of a coset geometry is defined as the Levi graph of its underlying incidence geometry.

Let $\Gamma=(G:G_0,G_1)$ be a rank two geometry.  The group $G_{01} = G_0 \cap G_1$ is called the \textit{Borel subgroup} of the coset geometry $\Gamma=(G:G_0,G_1)$.  
We call ${\mathcal L}(\Gamma):=\{ G_0, G_1, G_{01}\}$ the \textup{sublattice}
(of the subgroup lattice of $G$) \textit{spanned} by the collection $(G_i)_{i\in \{0,1\}}$.
The elements of the lattice are called the \textit{parabolic subgroups} and the subgroups $G_i$'s are the 
\textit{maximal parabolic subgroups.}  (see \cite{Tit62a}).

Two coset geometries $\Gamma=(G:G_0,G_1)$ and $\Gamma'=(H:H_0,H_1)$ are \textit{isomorphic} if there exists a group isomorphism $f:G \rightarrow H$ such that $f(G_0) = H_0$ and $f(G_1) = H_1$.  A coset geometry $\Gamma=(G:G_0,G_1)$ is \textit{connected} if its underlying incidence geometry is connected. 

We provide an example of a connected rank two coset geometry  (see \cite{DS2010}).
There exist exactly two  geometries  $\Gamma (PSL(2, 31): D_{15}, A_5) $ up to conjugacy, and one up to isomorphism. 
This geometry is connected since $\langle  D_{15}, A_5 \rangle =PSL(2, 31)$.
The subgroups $D_{15}, A_5$ are the maximal parabolic subgroups and the borel subgroup is $D_5$.

\section{Background results}

Here we present background results regarding graphs and incidence geometries that are necessary to understand our results.  Many of the results are part of the folklore, so proofs will often be omitted. 

\subsection{Edge-transitive graphs}
It is well-known (see for instance~\cite{Big93}) that a group $G$ that acts transitively on the set of edges of a graph $X$ has at most two vertex orbits. We state this as a proposition.

\begin{proposition}\label{propC}~\cite{Big93}
If $G$ is a group that acts transitively on the set of edges of a connected graph $X$, then there are at most two orbits on the vertex set.  If there are two orbits, then $X$ is bipartite. If there is one orbit, then $X$ is $G$-vertex-transitive and $G$ acts transitively on the set of arcs of $X$. 
\end{proposition}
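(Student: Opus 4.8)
The plan is to derive all three assertions from edge-transitivity together with connectedness, treating them in the order stated. For the bound on the number of vertex orbits, I would fix once and for all a reference edge $e_0 = \{a,b\}$ of $X$; such an edge exists, and since $X$ is connected every vertex lies on at least one edge. Given an arbitrary vertex $w$, choose an edge $e$ incident with $w$ and, using edge-transitivity, an element $g \in G$ with $g(e) = e_0$. Then $g(w) \in \{a,b\}$, so $w$ lies in the orbit $a^G$ or in the orbit $b^G$. Hence $X$ has at most two vertex orbits.

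Next, suppose there are exactly two vertex orbits; they must then be $a^G$ and $b^G$, and being distinct orbits they are disjoint. I would show that no edge has both endpoints in the same orbit: if $\{u,v\}$ were such an edge, choosing $g \in G$ with $g(\{u,v\}) = \{a,b\}$ would give $\{g(u),g(v)\} = \{a,b\}$, since $g$ is a bijection, placing $a$ and $b$ in a common orbit --- a contradiction. Therefore every edge joins $a^G$ to $b^G$, so $X$ is bipartite with these two orbits as its parts.

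Finally, assume there is a single vertex orbit, so $X$ is $G$-vertex-transitive, and it remains to prove that $G$ is arc-transitive. The natural first move is a reduction: given arcs $(u_1,v_1)$ and $(u_2,v_2)$, edge-transitivity supplies $g$ with $g(\{u_1,v_1\}) = \{u_2,v_2\}$; if $g(u_1) = u_2$ then $g$ already maps the first arc to the second, and otherwise $g(u_1) = v_2$, $g(v_1) = u_2$, so it suffices to post-compose with an automorphism interchanging the two endpoints of $\{u_2,v_2\}$. By edge-transitivity, the existence of such a ``flip'' for one edge is equivalent to its existence for every edge (conjugate the flip by an element moving one edge to the other), so the whole statement reduces to producing a single endpoint-reversing automorphism. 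This last step is where I expect the real difficulty to lie: a priori $G$ might fix the two endpoints of every edge setwise without ever swapping them, in which case the arcs split into two $G$-orbits that are mutual reverses, yielding a $G$-invariant orientation of $X$; vertex-transitivity then forces every vertex to have equal in- and out-degree, and one finishes by ruling this out (in particular it is impossible when the valency is odd, which covers the cubic graphs central to this paper). The first two assertions are routine; making this orientation/arc-counting argument watertight is the only genuinely delicate point.
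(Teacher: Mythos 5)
The paper itself offers no proof of this proposition (it is quoted from Biggs as folklore), so your argument has to stand on its own. Your first two parts do: mapping an edge through an arbitrary vertex onto a fixed edge $\{a,b\}$ shows every vertex lies in $a^G\cup b^G$, and if the two orbits are distinct then no edge can have both ends in one orbit, which gives the bipartition. Both steps are correct and are the standard argument.

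The gap is in the third part, and it is not a gap you can close: the step you defer --- ``ruling out'' the $G$-invariant orientation --- is impossible in the stated generality, because that assertion of the proposition is false for even valency. Your counting argument (vertex-transitivity forces equal in- and out-degree at every vertex, hence even valency) is exactly the proof of Tutte's result, quoted in the paper as Proposition~\ref{propE}, that a $G$-half-arc-transitive graph is regular of even valence; it eliminates the invariant orientation only when the valency is odd. For even valency the bad case genuinely occurs: half-arc-transitive graphs such as the Holt graph (4-valent, vertex- and edge- but not arc-transitive, even with $G$ the full automorphism group) exist, and the paper itself admits this possibility in Proposition~\ref{propD}. So your proof establishes the first two claims in full and the third only for odd valency (which does cover the cubic graphs central to the paper), but the final claim cannot be established as stated: the delicate point you flagged is witnessed by a counterexample, not resolved by a missing lemma, and the proposition should really be read with the arc-transitivity conclusion restricted (e.g.\ to odd valency) or weakened to the trichotomy of Proposition~\ref{propD}.
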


In this paper we are interested mainly in the case where the action has two orbits: the orbit of black vertices and the orbit of white vertices. If $G$ is too big, i.e. if there is only one orbit, then we select an appropriate subgroup of $G$, if it exists.

\begin{proposition} \label{propD}
Let $X$ be a $G$-edge-transitive connected graph.
 Then exactly one of the following is true.
\begin{enumerate}
\item $X$ is $G$-arc transitive.
\item $X$ is $G$-half-arc transitive. (edge- and vertex-transitive but not arc-transitive).
\item $X$ is $G$-semi-symmetric. (edge-transitive but not vertex-transitive.)
\end{enumerate}
\end{proposition}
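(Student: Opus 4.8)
The plan is to run the whole argument off Proposition~\ref{propC}. That result tells us that a connected $G$-edge-transitive graph $X$ has either one or two $G$-orbits on its vertex set, and the two cases are handled separately. First I would treat the case of two vertex orbits: here the two orbits are exactly the two parts of the bipartition of $X$, so $G$ does not act transitively on vertices and therefore $X$ is $G$-semi-symmetric, which is alternative~(3). Moreover a group that is transitive on the arcs of a graph without isolated vertices is automatically vertex-transitive (an arc issuing from $u$ can be sent to an arc issuing from any other vertex), so in this case $X$ is neither $G$-arc-transitive nor $G$-half-arc-transitive: alternatives (1) and (2) fail and only (3) holds.

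The real content is the remaining case, where $X$ is $G$-vertex-transitive. I would fix an edge $e=\{u,v\}$ and examine its stabiliser $G_e$. Each element of $G_e$ either fixes both $u$ and $v$ or swaps them, and since edge stabilisers of distinct edges are conjugate (by edge-transitivity), whether $G_e$ contains an endpoint-swapping element does not depend on the choice of $e$. In the first subcase, where some element of $G_e$ swaps $u$ and $v$, the stabiliser $G_{(u,v)}$ of the ordered pair has index $2$ in $G_e$, so $[G:G_{(u,v)}] = 2[G:G_e] = 2\,|E(X)|$, which is the total number of arcs; hence the arc orbit of $(u,v)$ is everything and $X$ is $G$-arc-transitive, alternative~(1). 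In the second subcase, where $G_e$ fixes $u$ and $v$ pointwise, we have $G_{(u,v)}=G_e$, so the orbit of $(u,v)$ has size $|E(X)|$, and no element of $G$ maps $(u,v)$ to $(v,u)$ (such an element would lie in $G_e$ and swap the endpoints); thus there are exactly two arc orbits, $X$ is not $G$-arc-transitive, and being also $G$-vertex- and $G$-edge-transitive it is $G$-half-arc-transitive, alternative~(2). Being vertex-transitive, $X$ is not $G$-semi-symmetric in either subcase.

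Finally I would check exclusivity: alternative~(3) is incompatible with (1) and (2) because each of the latter forces $G$-vertex-transitivity, while (1) and (2) exclude each other by the very definition of $G$-half-arc-transitive. Together with the exhaustiveness established above, this yields that exactly one of the three holds. I do not expect a genuine obstacle here; the only thing needing care is the bookkeeping that makes the trichotomy simultaneously exhaustive and mutually exclusive — in particular keeping in mind that $G$-arc-transitivity already subsumes $G$-vertex-transitivity, so that the semi-symmetric alternative is precisely the failure of vertex-transitivity, and checking via the $G_e$ dichotomy that the vertex-transitive case splits cleanly into the arc-transitive and half-arc-transitive subcases with nothing left over.
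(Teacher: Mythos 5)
The paper itself gives no proof of this proposition (it is one of the results explicitly left to folklore), so there is nothing to compare line by line; judged on its own, your argument is correct and complete. The structure is the natural one: Proposition~\ref{propC} gives the one-orbit/two-orbit dichotomy, the two-orbit case is semi-symmetric in the sense intended by the proposition (edge- but not vertex-transitive, which is the reading to use -- the literal wording of definition (5) in the paper, ``transitive on arcs but not on vertices,'' is evidently a slip, since arc-transitivity forces vertex-transitivity on a connected graph with an edge, exactly as you observe), and in the vertex-transitive case your orbit--stabilizer count on the edge stabilizer $G_e$ cleanly splits arcs into one orbit (arc-transitive) or two (half-arc-transitive), with mutual exclusivity following from the definitions. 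No gaps.
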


\begin{proposition}\label{propE} ~\cite{Tut66}
If $X$ is $G$-half-arc transitive then it is regular and has an even valence.
\end{proposition}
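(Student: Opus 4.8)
The plan is to prove the two assertions separately: first that $X$ is regular, then that its valence is even. Regularity is immediate, since $G\le\Aut(X)$ acts transitively on the vertices and every automorphism preserves vertex degree; hence all vertices of $X$ have a common valence $d$.

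For the parity of $d$, I would first pin down the orbits of $G$ on the arc set. Every arc lies on an edge and $G$ is edge-transitive, so by pushing the two arcs of one fixed edge around with elements of $G$ one sees that every arc lies in the $G$-orbit of one of those two arcs; thus there are at most two arc-orbits, and since $X$ is not $G$-arc-transitive there are exactly two, say $A_1$ and $A_2$. The next key step is to show that arc reversal $(u,v)\mapsto(v,u)$ interchanges $A_1$ and $A_2$, i.e. no element of $G$ carries an arc to its reverse. Note that $g(u,v)=(v,u)$ is the same as saying $g$ stabilizes the edge $\{u,v\}$ and exchanges its two arcs; so if such a $g$ existed, then conjugating by elements realizing edge-transitivity would produce, for \emph{every} edge, an element of $G$ swapping its two arcs, and together with edge-transitivity this forces $G$-arc-transitivity --- a contradiction. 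Hence reversal maps $A_1$ bijectively onto $A_2$.

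Finally I would double-count over the vertices. By vertex-transitivity, the number $a$ of arcs of $A_1$ having a given vertex as tail is the same for every vertex, as is the number $b$ of arcs of $A_2$ with a given tail, and $a+b=d$. Counting $A_1$ by tails gives $|A_1|=a\,|V(X)|$. Counting $A_1$ by heads: an arc of $A_1$ with head $v$ is precisely the reverse of an arc of $A_2$ with tail $v$, so every vertex is the head of exactly $b$ arcs of $A_1$, giving $|A_1|=b\,|V(X)|$. Therefore $a=b$ and $d=a+b=2a$ is even.

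The main obstacle is the middle step, namely establishing that reversal swaps the two arc-orbits, since this is the only place where the hypothesis ``$G$-edge-transitive but not $G$-arc-transitive'' is genuinely exploited; the regularity claim and the concluding double count are routine. The one subtlety to handle carefully there is the identification of ``some $g\in G$ sends an arc to its reverse'' with ``some $g\in G$ fixes the underlying edge setwise and exchanges its two arcs,'' which is what makes the conjugation argument go through.
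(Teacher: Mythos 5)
Your proof is correct. The paper gives no argument for this proposition --- it simply cites Tutte --- so there is nothing to diverge from; your argument (two arc-orbits interchanged by arc reversal, established via the conjugation argument that no element of $G$ can reverse an arc without forcing arc-transitivity, followed by counting one orbit by tails and by heads over the vertex-transitive vertex set) is exactly the classical proof underlying that citation, and every step is sound.
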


\begin{proposition}\label{propF}
Let $X$ be a bipartite graph with a given black and white vertex coloring. Then 
$Aut_o(X) = Aut_O(X)$ if and only if no automorphism of $X$ maps a black vertex to a white vertex.
\end{proposition}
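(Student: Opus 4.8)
The plan is to reduce everything to the elementary dichotomy satisfied by colour-respecting automorphisms. Write $B$ for the set of black vertices and $W$ for the set of white vertices, so that $V(X)=B\sqcup W$. First I would record the inclusion $Aut_o(X)\subseteq Aut_O(X)$, which is immediate: if $g$ sends each vertex to a vertex of the same colour then $g(B)\subseteq B$ and $g(W)\subseteq W$, and since $g$ is a bijection this forces $g(B)=B$ and $g(W)=W$, so $g$ preserves the colour partition. The content of the proposition is therefore a criterion for the reverse inclusion, and this is governed by the following observation: if $g\in Aut_O(X)$, then $g$ maps the two-block partition $\{B,W\}$ to itself, so either $g(B)=B$ and $g(W)=W$ (equivalently $g\in Aut_o(X)$) or $g(B)=W$ and $g(W)=B$, in which case $g$ in particular maps black vertices to white vertices. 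In other words, $Aut_O(X)\setminus Aut_o(X)$ consists exactly of the automorphisms interchanging the two colour classes.

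For the implication ($\Leftarrow$): if no automorphism of $X$ maps a black vertex to a white vertex, then no element of $Aut_O(X)$ can be of the colour-interchanging type above, so $Aut_O(X)\subseteq Aut_o(X)$; combined with the trivial inclusion this yields $Aut_o(X)=Aut_O(X)$.

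For the implication ($\Rightarrow$) I would argue the contrapositive: suppose some automorphism $g$ maps a black vertex $b_0$ to a white vertex. The step that needs care is upgrading this to ``$g$ maps \emph{every} black vertex to a white vertex'', i.e. to $g\in Aut_O(X)$; this is the main obstacle, and it is where connectedness of $X$ enters. In a connected bipartite graph two vertices lie in the same colour class precisely when the distance between them is even, and $g$ preserves distances; so for any $b\in B$ the distance $d(b_0,b)$ is even, hence $d(g(b_0),g(b))$ is even, hence $g(b)$ lies in the same class as $g(b_0)\in W$. Thus $g(B)\subseteq W$, and symmetrically $g(W)\subseteq B$, so, $g$ being a bijection, $g(B)=W$ and $g(W)=B$. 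Therefore $g$ interchanges the colour classes, so $g\in Aut_O(X)\setminus Aut_o(X)$ and $Aut_o(X)\subsetneq Aut_O(X)$. (Connectedness is genuinely used in this upgrading step: for a disconnected bipartite graph an automorphism may reverse colours on one component while preserving them on another, so one should either take $X$ connected or read the hypothesis, consistently with the definition of duality given above, as ``$X$ admits no automorphism carrying the whole black class onto the white class'', under which reading the upgrading step becomes vacuous.)
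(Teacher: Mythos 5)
Your argument is correct, but it takes a noticeably longer route than the paper, whose entire proof is the single sentence that the statement ``follows from the definition of $Aut_o(X)$ and $Aut_O(X)$.'' In effect the authors read the proposition at the level of colour-respecting automorphisms, where your opening dichotomy --- $Aut_O(X)\setminus Aut_o(X)$ is precisely the set of automorphisms interchanging the two colour classes --- already settles both implications with nothing further to check. You instead read ``automorphism of $X$'' as an arbitrary element of $Aut(X)$, and under that stronger reading the forward implication genuinely requires your distance-parity upgrading step, hence connectedness; your caveat is substantive, not cosmetic. Indeed, for $X$ the disjoint union of a single edge and a path of length two (properly two-coloured), the flip of the edge component is an automorphism sending a black vertex to a white one, yet $Aut_o(X)=Aut_O(X)$ because the colour classes have different sizes; so the proposition as literally stated fails for disconnected graphs, and either your connectedness hypothesis or your reinterpretation of the hypothesis as ``no automorphism carries the whole black class onto the white class'' is needed to repair it. The paper sidesteps this by treating the statement as purely definitional and only confronting the connected/disconnected distinction in the subsequent Proposition~\ref{propG}. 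In short: both proofs rest on the same two-block dichotomy, but yours buys validity for the $Aut(X)$-level reading (via the even-distance characterization of the bipartition classes) at the cost of a connectedness assumption, which you correctly identify and discuss.
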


\begin{proof}
This follows from the definition of $Aut_o(X)$ and $Aut_O(X)$.
\end{proof}

\begin{proposition} \label{propG}
Let $X$ be a connected bipartite graph with a given black and white coloring. Then $Aut(X) = Aut_O(X)$.
A disconnected bipartite graph has $Aut(X) = Aut_O(X)$ if and only if each connected component $Y$ has
$Aut_o(Y) = Aut_O(Y)$.
\end{proposition}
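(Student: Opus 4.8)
The plan is to prove the connected statement by an elementary uniqueness argument, and then reduce the disconnected statement to it.

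\textbf{The connected case.} The one fact I need is that a \emph{connected} bipartite graph has a unique partition of its vertex set into two independent sets, namely the partition $\{B,W\}$ into colour classes: after fixing a vertex $v$, membership in one class or the other is forced by the parity of the graph distance to $v$, so this partition depends only on the graph. I would record this first. Now take $\phi\in Aut(X)$. Since $\phi$ is an adjacency-preserving bijection, $\{\phi(B),\phi(W)\}$ is again a partition of $V(X)$ into two independent sets, hence equals $\{B,W\}$; thus $\phi$ sends each colour class onto a colour class, i.e.\ $\phi\in Aut_O(X)$. As $Aut_O(X)\subseteq Aut(X)$ holds by definition, we get $Aut(X)=Aut_O(X)$. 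This is the substance of the proposition and is clean.

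\textbf{The disconnected case, one direction.} Write $X=Y_1\sqcup\cdots\sqcup Y_n$ for the decomposition into connected components, with $B=\bigsqcup_i B_i$ and $W=\bigsqcup_i W_i$, where $\{B_i,W_i\}$ is the (now unique) bipartition of $Y_i$; any $\phi\in Aut(X)$ induces a permutation $\pi$ of the components with $\phi(Y_i)=Y_{\pi(i)}$. Suppose some component $Y_j$ has $Aut_o(Y_j)\neq Aut_O(Y_j)$. By Proposition~\ref{propF} there is an automorphism $\sigma$ of $Y_j$ taking some black vertex to a white one; since $Y_j$ is connected, $\sigma$ must in fact interchange $B_j$ and $W_j$ (both then nonempty). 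Extending $\sigma$ by the identity on all other components yields $\phi\in Aut(X)$ with $\phi(B)=W_j\sqcup\bigsqcup_{i\neq j}B_i$. Because $W_j\neq B_j$ and, $X$ being disconnected, $B_i\neq W_i$ for at least one $i\neq j$, this set is neither $B$ nor $W$, so $\phi\notin Aut_O(X)$. Hence $Aut(X)\neq Aut_O(X)$; this proves the contrapositive of the ``if'' implication.

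\textbf{The converse, and the main obstacle.} Conversely, assume $Aut_o(Y_i)=Aut_O(Y_i)$ for every $i$, i.e.\ no component admits a duality, and let $\phi\in Aut(X)$ with component permutation $\pi$. Each restriction $\phi|_{Y_i}\colon Y_i\to Y_{\pi(i)}$ is an isomorphism, so it carries $\{B_i,W_i\}$ onto $\{B_{\pi(i)},W_{\pi(i)}\}$ and is therefore either ``colour-preserving'' or ``colour-reversing'' on $Y_i$. I would try to show these choices must be globally coherent, so that $\phi$ permutes $\{B,W\}$ and lies in $Aut_O(X)$. I expect this to be the delicate step: running around a $\pi$-cycle, the reversals multiply to an automorphism of a single component, which cannot be colour-reversing (that would be a duality), so each cycle carries an even number of reversals -- but a priori that still permits $\phi$ to be colour-preserving on one cycle while genuinely mixing the classes along another, so to conclude one appears to need a compatibility condition among the components. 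I would therefore make explicit the setting in which the statement is really applied -- connected graphs, or disjoint unions of mutually isomorphic coloured components -- and check that in that setting the cycle/parity bookkeeping above, combined with Proposition~\ref{propF} applied componentwise, closes the argument.
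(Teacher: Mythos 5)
The paper offers no proof of this proposition (it is one of the ``folklore'' statements whose proofs are omitted), so there is no argument of the authors to compare yours with; I can only assess your proposal on its own terms. Your connected-case argument is correct and is the standard one: a connected bipartite graph (with an edge) has a unique partition into two independent sets, so every automorphism carries $\{B,W\}$ to itself, giving $Aut(X)=Aut_O(X)$. Your proof of the ``only if'' half of the disconnected statement -- extend a duality of one component by the identity on the others and check the image of $B$ is neither $B$ nor $W$ -- is also correct, and your use of Proposition~\ref{propF} there is legitimate because, by the connected case, every automorphism of a component lies in $Aut_O$ of that component.

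Your hesitation about the remaining implication is justified, and in fact no bookkeeping can close it: the ``if'' direction is false as stated. Take $X=Y_1\sqcup Y_2\sqcup Y_3$, where each $Y_i$ is a path $a_i-m_i-c_i$, coloured so that $m_1,m_3$ are black with white ends while $m_2$ is white with black ends. Every automorphism of a component fixes its centre and permutes its two like-coloured ends, so $Aut_o(Y_i)=Aut_O(Y_i)=Aut(Y_i)$ for every $i$. But the automorphism $\phi$ of $X$ exchanging $Y_1$ and $Y_2$ via $a_1\leftrightarrow a_2$, $m_1\leftrightarrow m_2$, $c_1\leftrightarrow c_2$ and fixing $Y_3$ pointwise sends $B=\{m_1,a_2,c_2,m_3\}$ to $\{m_2,a_1,c_1,m_3\}$, which is neither $B$ nor $W$; hence $Aut(X)\neq Aut_O(X)$ although every component satisfies the hypothesis. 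This is exactly the incoherence you anticipated: your parity argument (an even number of colour-reversing restrictions around each cycle of the component permutation) is the only constraint, and it is satisfied here -- one $2$-cycle with two reversals, one fixed component with none -- without global coherence. So your proof is as complete as the statement allows: the implication does hold when $X$ has at most two components, or under an added hypothesis such as ``no two distinct components are related by a colour-reversing isomorphism'' (then every automorphism is colour-preserving componentwise), but not in general. For the paper this is harmless, since only the first sentence of the proposition, the connected case, is used afterwards; still, the disconnected claim should be corrected or restricted rather than proved.
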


Hence, for connected bipartite graphs we need to consider only $Aut_o(X)$ and $Aut(X)$.

\begin{proposition}\label{propH}
If $X$ is a connected bipartite graph with a given black and white coloring, then either
$Aut_o(X) = Aut(X)$ or $Aut_o(X)$ is a subgroup of index 2 in $Aut(X)$ and the graph is self-dual.
\end{proposition}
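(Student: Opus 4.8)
The plan is to realize $Aut_o(X)$ as the kernel of a natural homomorphism from $Aut(X)$ onto a group of order at most two, so that the index dichotomy becomes an immediate consequence of the first isomorphism theorem.

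First I would record the key structural fact that makes everything work: in a \emph{connected} bipartite graph, the bipartition into black and white vertices is the \emph{unique} bipartition. Indeed, fixing a base vertex, a vertex lies in the black class if and only if its distance to the base vertex is even, so the partition is determined by the graph (and the choice of which class is ``black''). Consequently, for any $g\in Aut(X)$ the pair $\{g(B),g(W)\}$ is again a bipartition of $X$, and by uniqueness either $g$ fixes each of $B$ and $W$ setwise, or $g$ interchanges $B$ and $W$. (This is precisely the statement, via Proposition~\ref{propG}, that $Aut(X)=Aut_O(X)$: every automorphism of a connected bipartite graph respects the colour partition.)

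Next I would define $\varphi\colon Aut(X)\to \mathbb{Z}/2\mathbb{Z}$ by setting $\varphi(g)=0$ if $g$ fixes the two colour classes setwise and $\varphi(g)=1$ if $g$ swaps them; the dichotomy above shows $\varphi$ is well defined on all of $Aut(X)$, and composing automorphisms adds their effect on the bipartition modulo two, so $\varphi$ is a group homomorphism. By the very definitions of the colour-preserving automorphisms, $\ker\varphi$ is exactly $Aut_o(X)$. Hence $Aut_o(X)$ is normal in $Aut(X)$ and $[Aut(X):Aut_o(X)]=|\operatorname{im}\varphi|\in\{1,2\}$. If the image is trivial, then $Aut_o(X)=Aut(X)$, the first alternative. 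If the image has order two, then the index is $2$, and any $g\in Aut(X)$ with $\varphi(g)=1$ sends black vertices to white vertices, i.e. it is a duality of $X$; thus $X$ is self-dual, the second alternative.

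The only genuinely substantive ingredient is the uniqueness of the bipartition in the connected case, which is what guarantees that $\varphi$ is even well defined; once this is in place, the rest is bookkeeping. It is worth remarking in the proof (or immediately after) that connectedness is essential: for a disconnected bipartite graph one can flip the colours on some components and not others, so neither the index-$2$ bound nor the global self-duality conclusion survives, which is the content of the second sentence of Proposition~\ref{propG} and the reason the statement is phrased for connected $X$.
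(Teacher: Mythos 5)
Your proof is correct: the paper states Proposition~\ref{propH} without proof (it is among the folklore results whose proofs are omitted), and your argument --- uniqueness of the bipartition of a connected bipartite graph, hence a well-defined homomorphism from $Aut(X)$ onto a group of order at most two with kernel $Aut_o(X)$, any element outside the kernel being a duality --- is exactly the standard argument the authors intend, consistent with their Proposition~\ref{propG} and the remark on disconnected graphs. The only pedantic caveat is that uniqueness of the bipartition requires $X$ to contain an edge; the one-vertex case is trivial, since there $Aut(X)$ is trivial and the first alternative holds.
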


\subsection{Rank two incidence- and coset geometries}

From now on all incidence and coset geometries that we consider are of rank two. Many propositions in what follows rely on this restriction.

\begin{proposition}\label{propJ}
A rank two incidence geometry that is flag-transitive or weakly flag-transitive is also point- and line-transitive.  
\end{proposition}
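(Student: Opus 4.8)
The plan is to treat the two hypotheses separately, in each case passing to the Levi graph $X$ of $\Psi$ and using two structural facts: an automorphism of an incidence geometry preserves the type of each element (it sends points to points and lines to lines), whereas a duality interchanges the two types.

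For a flag-transitive geometry the argument will be short. I fix one flag $(p_0,\ell_0)$. Given an arbitrary point $p$, it is incident with some line $\ell$ (this being part of what it means to be a geometry), so $(p,\ell)$ is again a flag, and flag-transitivity supplies $g\in Aut_o(\Psi)$ with $g(p_0,\ell_0)=(p,\ell)$. Since $g$ respects types, $g(p_0)=p$, so $Aut_o(\Psi)$ is transitive on points; exchanging the roles of points and lines gives transitivity on lines. This settles the proposition in the flag-transitive case.

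For a weakly flag-transitive geometry I will let $A$ be the group of automorphisms and dualities of $\Psi$, regarded as acting on $X$; by hypothesis $A$ is transitive on the edges (flags) of $X$, while $Aut_o(\Psi)$ is not. Recording, for each element of $A$, whether or not it swaps the two color classes defines a homomorphism from $A$ to a group of order at most two whose kernel is $Aut_o(\Psi)$; since $\Psi$ is not flag-transitive but $A$ is transitive on flags, a duality must exist, so $Aut_o(\Psi)$ is normal of index exactly $2$ in $A$. Now $A$ is edge-transitive on the connected graph $X$, so by Proposition~\ref{propC} it has at most two orbits on vertices. It cannot have exactly two: if it did, then (the bipartition of a connected bipartite graph being unique, and a graph automorphism either fixing or interchanging its two parts) the orbits would be the color classes $P$ and $L$ and no element of $A$ would swap them, contradicting the existence of a duality. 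Hence $A$ is vertex-transitive on $X$. As $Aut_o(\Psi)$ is normal of index $2$ in $A$, it has at most two vertex orbits; being color-preserving it cannot mix the non-empty sets $P$ and $L$, so its orbits are exactly $P$ and $L$. Thus $Aut_o(\Psi)$ is transitive on points and on lines.

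The crux, I expect, is this vertex-transitivity step in the weak case: everything turns on the observation that a single duality fuses the two parts of the Levi graph into one $A$-orbit, which is what excludes the alternative of two vertex orbits in Proposition~\ref{propC} and then pins the orbits of the index-two subgroup $Aut_o(\Psi)$ down to $P$ and $L$. Two smaller points deserve care: the flag-transitive half uses only that every point lies on a line and every line on a point, and the weak half uses connectedness of the Levi graph, since Proposition~\ref{propC} is stated for connected graphs.
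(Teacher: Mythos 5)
Your proof is correct, but it takes a more self-contained route than the paper, whose own proof is very short: the flag-transitive case is dismissed as holding ``by definition'', and the weakly flag-transitive case is handled by citing Maru\v si\v c--Pisanski, namely that weak flag-transitivity is equivalent to half-arc-transitivity of the Levi graph, which in turn implies point- and line-transitivity. Your flag-transitive half makes explicit the one-line argument the paper leaves implicit (including the hypothesis that every point and line lies in some flag). For the weak case, instead of invoking the external equivalence, you reprove the relevant part of it directly: the homomorphism from the group $A$ of automorphisms and dualities onto a group of order at most two with kernel $Aut_o(\Psi)$, the forced existence of a duality, vertex-transitivity of $A$ via Proposition~\ref{propC}, and then the orbit count for the index-two color-preserving kernel. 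What your version buys is independence from the cited result and a transparent identification of exactly where connectedness and the duality enter; what the paper's version buys is brevity, at the cost of outsourcing the content. One small caveat: your step ``if $A$ had two vertex orbits they would be the color classes'' uses Proposition~\ref{propC} in its standard stronger form (the two orbits of an edge-transitive group form a bipartition, as in Biggs), combined with uniqueness of the bipartition of a connected bipartite graph; the paper states only ``$X$ is bipartite'', so strictly speaking you are relying on the stronger folklore statement, which is easy to supply but worth noting.
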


\begin{proof}
By definition flag-transitive implies point- and line-transitive. 
Suppose that the incidence geometry is weakly flag-transitive, in other words $Aut (X)$ is edge-transitive and $Aut_o(X)$
is not edge-transitive. Following~\cite{MarusicPisanski}, weakly flag-transitive is equivalent to half-arc-transitive; 
which implies point- and line-transitive.
\end{proof}

The following proposition gives a characterization of connected rank two coset geometries.

\begin{proposition}\label{propK}
The coset geometry $\Gamma=(G:G_0,G_1)$ is connected if and only 
if $G_0$ and $G_1$ together generate the whole group $G$.
\end{proposition}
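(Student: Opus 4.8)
The plan is to analyse the connected component of a single vertex of the Levi graph and to show that it is governed exactly by the subgroup $H := \langle G_0, G_1 \rangle$. The starting point is the elementary reformulation of incidence: for $a,b \in G$, the point $aG_0$ and the line $bG_1$ are incident, i.e.\ adjacent in the Levi graph, if and only if $aG_0 \cap bG_1 \neq \emptyset$, and writing a common element as $a g_0 = b g_1$ with $g_i \in G_i$ shows this holds if and only if $a^{-1}b \in G_0 G_1$ (equivalently $b^{-1}a \in G_1 G_0$). In particular the neighbors of the base point $eG_0$ are precisely the lines $g_0 G_1$ with $g_0 \in G_0$; the neighbors of a point $hG_0$ are precisely the lines $bG_1$ with $b \in h G_0 G_1$; and the neighbors of a line $hG_1$ are precisely the points $aG_0$ with $a \in h G_1 G_0$.

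I would then prove the key claim that the connected component $C$ of the Levi graph containing $eG_0$ is exactly $C = \{\, hG_0 : h \in H \,\} \cup \{\, hG_1 : h \in H \,\}$. The inclusion ``$\subseteq$'' holds because the right-hand side is a union of connected components: by the neighbor description, every neighbor of $hG_0$ (with $h \in H$) is a line $bG_1$ with $b \in h G_0 G_1 \subseteq H$, and every neighbor of $hG_1$ is a point $aG_0$ with $a \in h G_1 G_0 \subseteq H$, so the set is closed under passing to neighbors; since it contains $eG_0$, it contains all of $C$. For ``$\supseteq$'' I would show, by induction on the length $k$ of an expression $h = s_1 s_2 \cdots s_k$ with each $s_i \in G_0 \cup G_1$, that both $hG_0$ and $hG_1$ lie in $C$. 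The base case $k = 0$ is $h = e$: $eG_0$ is the base vertex, and $eG_1$ is adjacent to it since $e \in G_0 G_1$. For the inductive step one puts $h' = s_1 \cdots s_{k-1}$, knows $h'G_0, h'G_1 \in C$ by hypothesis, and checks from the neighbor description that, whichever of $G_0$, $G_1$ contains $s_k$, one of $hG_0$, $hG_1$ coincides with the corresponding coset of $h'$ while the other is adjacent to it; hence both are in $C$.

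Granting the claim, the proposition follows at once in both directions. If $H = G$, then $C$ contains $aG_0$ and $aG_1$ for every $a \in G = H$, so $C$ is the entire vertex set and the Levi graph is connected. Conversely, if the Levi graph is connected then $aG_0 \in C$ for every $a \in G$; but $aG_0 \in C$ forces $a \in h G_0 \subseteq H$ for some $h \in H$, so $G \subseteq H$ and thus $H = G$. The one point that genuinely needs care --- and the only place a naive argument can fail --- is the induction in the key claim: the inductive statement must assert membership in $C$ for vertices of \emph{both} colors simultaneously, since the step handling $hG_0$ typically routes through $h'G_1$ and vice versa. Once this is set up correctly, all the verifications are immediate from the incidence reformulation, so the rest is routine.
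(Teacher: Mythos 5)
Your argument is correct: the incidence reformulation $aG_0 \mathrel{I} bG_1 \iff a^{-1}b \in G_0G_1$ is right, the closure argument shows the set $\{hG_0, hG_1 : h \in \langle G_0,G_1\rangle\}$ contains the component of $eG_0$, and the induction on word length (carried out for both colors simultaneously, as you correctly insist) gives the reverse inclusion; both directions of the proposition then follow. However, the paper does not argue this way at all: its entire proof is a citation, observing that the statement is a particular case of Theorem~2 in Dehon's paper \emph{Classifying geometries with Cayley} (which treats connectivity of coset geometries in arbitrary rank). So your route is genuinely different in character: a self-contained, elementary rank-two argument versus an appeal to a general theorem. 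The citation buys brevity and the general-rank statement; your proof buys transparency and, as a bonus, essentially establishes the remark the paper makes right after the proposition --- that the number of connected components equals the index of $\langle G_0,G_1\rangle$ in $G$ --- since your key claim identifies the component of $eG_0$ with the vertex set of the coset geometry of $H=\langle G_0,G_1\rangle$, and translating by coset representatives of $H$ in $G$ gives the other components.
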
 

\begin{proof}
This result  is a particular case of Theorem 2 in~\cite{Deh94}.
\end{proof}

In general, the number of connected components of a coset geometry is equal to the index of $\langle G_0,G_1 \rangle$ in $G$.  If $G_0,G_1$ do not generate the whole group $G$, we may take the group $H = \langle G_0,G_1\rangle$ and the resulting geometry $\Gamma'=(H:G_0,G_1)$ is a connected component of the original geometry.  From now on we assume that we are dealing with connected geometries.

\begin{proposition}\label{propL}~\cite{Tit74}
Let $\Gamma=(G:G_0,G_1)$ be a coset geometry. The corresponding incidence geometry $\Psi$ is $G$-flag-transitive.
\end{proposition}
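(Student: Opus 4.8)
The plan is to exhibit an explicit transitive action of $G$ on the flags of $\Psi$, namely the action of $G$ on cosets by left multiplication. First I would set up this action: $G$ acts on the point set $P=\{aG_0 : a\in G\}$ by $g\cdot aG_0 = (ga)G_0$ and on the line set $L=\{bG_1 : b\in G\}$ by $g\cdot bG_1 = (gb)G_1$. This is well defined on cosets because left translation by $g$ is a bijection of $G$ that carries a left coset of $G_i$ onto another left coset of $G_i$.

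Next I would verify that each $g\in G$ induces an automorphism of $\Psi$, i.e.\ that the action preserves the incidence relation. Recall $aG_0\,I\,bG_1$ if and only if $aG_0\cap bG_1\neq\emptyset$. If $x\in aG_0\cap bG_1$, then $gx\in (ga)G_0\cap (gb)G_1$, so $g$ maps incident pairs to incident pairs; applying the same observation to $g^{-1}$ gives the converse. In fact left multiplication by $g$ carries the set $aG_0\cap bG_1$ bijectively onto $(ga)G_0\cap(gb)G_1$, which makes the equivalence transparent. Hence $G$ acts on $\Psi$ by automorphisms (and consequently maps flags to flags), giving a homomorphism $G\to Aut_o(\Psi)$; we do not need this to be injective for the conclusion.

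Finally I would show that every flag lies in the $G$-orbit of the base pair $(G_0,G_1)$, which is itself a flag since $e\in G_0\cap G_1$. Let $(aG_0,bG_1)$ be an arbitrary flag; by definition of incidence we may pick $c\in aG_0\cap bG_1$. Then $cG_0=aG_0$ and $cG_1=bG_1$, so the flag equals $(cG_0,cG_1)=c\cdot(G_0,G_1)$. Therefore $G$ acts transitively on the flags of $\Psi$, which is exactly the assertion that $\Psi$ is $G$-flag-transitive. The argument is entirely elementary, so there is no genuine obstacle; the only points deserving a moment's care are that the coset action is well defined, that it preserves the (intersection-defined) incidence relation, and that $(G_0,G_1)$ is a legitimate flag to use as the orbit representative.
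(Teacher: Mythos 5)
Your proof is correct. The paper gives no argument of its own for this proposition---it simply cites Tits (1974)---and what you have written is precisely the standard argument behind that citation: left multiplication gives a well-defined, incidence-preserving action of $G$ (incidence being preserved because left translation carries $aG_0\cap bG_1$ bijectively onto $(ga)G_0\cap(gb)G_1$), and any flag $(aG_0,bG_1)$ equals $c\cdot(G_0,G_1)$ for any $c\in aG_0\cap bG_1$, so the action is transitive on flags. Your remarks that faithfulness is not needed and that $(G_0,G_1)$ is a legitimate base flag since $e\in G_0\cap G_1$ are exactly the right points of care; there is nothing to add.
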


\section{Characterizations of flag-transitive incidence geometries}

In this section, we give two characterizations of flag-transitive incidence geometries. 
Both of the following propositions are provided by Tits in 1863 (see for example~\cite{Tit62a}).  
They can be seen as versions of what is known as the ``Tits' Algorithm", which has been developed by Francis Buekenhout (see for example~\cite{BCDL2001,Deh94} and references).

\vspace{0.2cm}

\begin{proposition}~\label{propM}
Let $\Psi = (P,L,I)$ be a connected flag-transitive incidence geometry. Then there exists a group $G \leq Aut_o(\Psi)$ and two 
subgroups $G_0$ and $G_1$ with the following three properties.
\begin{enumerate}
\item $G$ acts flag-transitively on $\Psi$.
\item $G_0$ and $G_1$ together generate $G$.
\item The underlying incidence geometry of the coset geometry $\Gamma=(G:G_0,G_1)$ is isomorphic to $\Psi$.  
\end{enumerate} 
\end{proposition}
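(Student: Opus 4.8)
The plan is to reconstruct the group $G$ and the subgroups $G_0, G_1$ directly from the flag-transitive action on $\Psi$, and then verify the three properties in order. First I would fix a flag $F = \{p_0, \ell_0\}$ of $\Psi$ (a point $p_0$ incident with a line $\ell_0$); such a flag exists because $\Psi$, being connected, has at least one edge in its Levi graph. I would then take $G := \Aut_o(\Psi)$ itself (or, if one prefers, any subgroup acting flag-transitively — but the full automorphism group is the natural choice and satisfies (1) by hypothesis), and set $G_0 := Stab(G, p_0)$ and $G_1 := Stab(G, \ell_0)$, the stabilizers of the point and the line of the chosen flag. This makes property (1) immediate.

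Next I would establish property (3), the isomorphism of incidence geometries. The idea is the standard orbit-stabilizer dictionary: since $G$ is point-transitive (which follows from flag-transitivity by Proposition~\ref{propJ}), the map $gG_0 \mapsto g p_0$ is a well-defined bijection from the left cosets of $G_0$ onto $P$, and similarly $gG_1 \mapsto g \ell_0$ gives a bijection from the cosets of $G_1$ onto $L$. I would then check that these bijections respect incidence: a coset pair $aG_0, bG_1$ satisfies $aG_0 \cap bG_1 \neq \emptyset$ exactly when there is some $g$ with $g p_0 = a p_0$ and $g \ell_0 = b \ell_0$, i.e. when the point $a p_0$ is incident with the line $b \ell_0$ — here one uses that $G$ is flag-transitive, so every incident point-line pair is in the $G$-orbit of $F$, which is precisely the condition that lets us move the incidence of $p_0, \ell_0$ to that of $a p_0, b \ell_0$. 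Thus the pair of bijections together carry flags of $\Gamma$'s underlying geometry to flags of $\Psi$ and conversely, giving the claimed isomorphism.

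Finally I would deal with property (2): $G_0$ and $G_1$ generate $G$. By Proposition~\ref{propK}, the coset geometry $\Gamma = (G:G_0,G_1)$ is connected if and only if $\langle G_0, G_1\rangle = G$. But by property (3) already proved, the underlying incidence geometry of $\Gamma$ is isomorphic to $\Psi$, and $\Psi$ is connected by hypothesis; hence $\Gamma$ is connected, so $\langle G_0, G_1 \rangle = G$. This is the cleanest route — it reuses the characterization of connectedness rather than arguing about generators directly — so I would present the steps in the order (1), (3), (2).

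The main obstacle is the incidence-preservation verification inside step (3): one must be careful that the equivalence ``$aG_0 \cap bG_1 \neq \emptyset$'' $\iff$ ``$a p_0$ is incident with $b \ell_0$'' genuinely uses flag-transitivity and not merely point- and line-transitivity. The forward direction is routine (a common element $g$ exhibits the incidence after translating), but the backward direction requires that any incident pair be the image of $F$ under a single element of $G$, which is exactly the flag-transitivity hypothesis; I would make sure this is stated explicitly rather than glossed over. Everything else is bookkeeping with cosets and stabilizers.
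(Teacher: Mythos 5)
Your construction is correct and complete: it is exactly the classical Tits-style argument that the paper invokes without proof (it cites Tits and, immediately after the statement, describes precisely your coset dictionary, labeling points, lines and flags by left cosets of $G_0$, $G_1$ and $G_0\cap G_1$). Your care in using flag-transitivity (not just point- and line-transitivity) for the backward direction of the incidence equivalence, and your derivation of $\langle G_0,G_1\rangle=G$ from connectedness via Proposition~\ref{propK}, are both sound and match the intended standard proof.
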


The points of $\Psi$ can be labeled by left cosets of $G_0$ in $G$, the lines of $\Psi$ can be labeled as left cosets of $G_1$ in $G$, and the flags of $\Psi$ can be labeled by left cosets of $G_{01}=G_0 \cap G_1$ in $G$. 

\vspace{0.2cm}

\begin{proposition}~\label{propN}
Let $X$ be a connected bipartite edge-transitive graph with a given black and white coloring such that $Aut_o(X)$ acts transitively on the edges of $X$, and let $b$  be a  black vertex incident to a white  vertex $w$.
For every subgroup $H$ of $Aut_o(X)$ that acts transitively on the edges of $X$, there exists a coset geometry $\Gamma=(H:H_0,H_1)$
 whose underlying incidence geometry has Levi graph isomorphic to $X$, with $H_0=H \cap Stab(Aut_o(X),b)$, and $H_1=H \cap Stab(Aut_o(X),w)$.
\end{proposition}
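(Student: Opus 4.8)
The plan is to verify that the triple $(H : H_0, H_1)$, with $H_0 = H \cap \mathrm{Stab}(\Aut_o(X), b)$ and $H_1 = H \cap \mathrm{Stab}(\Aut_o(X), w)$, is a connected rank two coset geometry whose underlying incidence geometry has Levi graph isomorphic to $X$. First I would set up the standard bijection. Since $H \le \Aut_o(X)$ acts transitively on edges, and every edge of $X$ has one black and one white endpoint (the coloring is a bipartition preserved by $\Aut_o(X)$), $H$ acts transitively on black vertices and on white vertices separately: indeed, given two black vertices $b_1, b_2$, pick edges $\{b_1, w_1\}$ and $\{b_2, w_2\}$, and an $h \in H$ sending the first to the second; since $h$ preserves colors it sends $b_1 \mapsto b_2$. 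Thus the black vertex set is identified with $H/H_0$ via $hH_0 \mapsto h(b)$ (well-defined and injective because $H_0$ is exactly the $H$-stabilizer of $b$), and likewise the white vertex set with $H/H_1$ via $hH_1 \mapsto h(w)$.

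Next I would check that this bijection is an isomorphism of incidence structures. In the coset geometry $\Gamma = (H : H_0, H_1)$, cosets $aH_0$ and $cH_1$ are incident iff $aH_0 \cap cH_1 \neq \emptyset$, i.e. iff there is $h \in H$ with $h \in aH_0$ and $h \in cH_1$, equivalently $h H_0 = a H_0$ and $h H_1 = c H_1$. Under the identification above this says $h(b) = a(b)$ and $h(w) = c(w)$; since $\{b,w\}$ is an edge and $h$ is an automorphism, $\{h(b), h(w)\} = \{a(b), c(w)\}$ is an edge of $X$. Conversely, if $\{a(b), c(w)\}$ is an edge of $X$, then by edge-transitivity of $H$ there is $h \in H$ with $h(\{b,w\}) = \{a(b), c(w)\}$, and since $h$ preserves colors, $h(b) = a(b)$ and $h(w) = c(w)$, giving a common element of $aH_0$ and $cH_1$. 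So incidence in $\Gamma$ corresponds exactly to adjacency in $X$, and the Levi graph of $\Gamma$ is isomorphic to $X$ as a bipartite graph (with colors matching points/lines). Connectedness of $\Gamma$ then follows from Proposition~\ref{propK} together with the connectedness of $X$: the orbit of the edge $\{b,w\}$ under $\langle H_0, H_1\rangle$ is a connected subgraph (each generator in $H_0$ fixes $b$ and moves $w$ to a neighbour of $b$, each generator in $H_1$ fixes $w$ and moves $b$ to a neighbour of $w$), and since $X$ is connected this forces $\langle H_0, H_1 \rangle$ to act transitively on edges, hence (being a subgroup of $H$ that already surjects onto $H/H_0$ and $H/H_1$ on black and white vertices) to equal $H$; alternatively one invokes Proposition~\ref{propN}'s hypothesis machinery directly. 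The existence half is really just Proposition~\ref{propL} applied to $H$ once we know $\Gamma$ is the right geometry.

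The step I expect to require the most care is the bookkeeping that $H_0$ really is the full $H$-stabilizer of $b$ and $H_1$ the full $H$-stabilizer of $w$, rather than merely a subgroup: this is immediate from the definitions $H_0 = H \cap \mathrm{Stab}(\Aut_o(X), b)$ and $H \le \Aut_o(X)$, so $H_0 = \mathrm{Stab}(H, b)$, and similarly for $H_1$ — but it is the linchpin that makes the coset-to-vertex map a bijection rather than merely a surjection, so I would state it explicitly. A secondary subtlety worth a sentence is that the isomorphism produced is an isomorphism of \emph{colored} bipartite graphs (black $\leftrightarrow$ points, white $\leftrightarrow$ lines), which is exactly what the statement claims, so no appeal to self-duality or dualities is needed. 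Everything else is routine verification of the incidence condition, and no genuine obstacle arises.
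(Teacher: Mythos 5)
Your proposal is correct: it is the standard orbit--stabilizer/coset-identification argument (black vertices $\leftrightarrow$ cosets of $H_0$, white vertices $\leftrightarrow$ cosets of $H_1$, nonempty intersection $\leftrightarrow$ adjacency via edge-transitivity and color preservation), which is exactly the classical Tits-style proof the paper leaves implicit, as it states this proposition without proof as folklore. Your extra connectedness discussion is harmless but redundant --- once the Levi graph of $\Gamma$ is shown isomorphic to the connected graph $X$, connectedness of $\Gamma$ (and hence $\langle H_0,H_1\rangle=H$ via Proposition~\ref{propK}) follows immediately.
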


Using the above proposition, we see that from a flag-transitive incidence geometry $\psi$, we can define many associated coset geometries. 
One of these coset geometries is the largest core-free coset geometry, which we denote by $\Gamma_{\Psi}$.
To constuct these geometry $\Gamma_{\Psi}$, choose any black vertex $b$ and adjacent white vertex $w$, $G=Aut_o(X)$, $G_0=Stab(G,b)$, $G_1=Stab(G,w)$. Any other black vertex can be chosen, which gives an isomorphic construction.

 Now we define a \textit{stable} coset geometry $\Gamma= (H:H_0,H_1)$ to be a coset geometry with the property that
 $\Gamma_{\Psi (\Gamma)} \cong \Gamma$.  In~\ref{examplehemi}, we give an example to show that not all coset geometries are stable.
We also could define a \textit{stable} edge-transitive incidence geometry to be with the property $\Psi(\Gamma_{\Psi}) \cong\Psi$, but we notice that all edge-transitive incidence geometries are stable.  In other words, by combining the two above propositions, we observe that if we start with a connected flag-transitive incidence geometry, and we construct an associated coset geometry, the incidence geometry of this coset geometry is the same as the original one.

Each coset geometry $\Gamma$ defines a unique incidence geometry $\Psi (\Gamma)$, but in general, the converse is false. This leads us to the following proposition in the connected case.

\begin{proposition}
If $\Psi$ is a connected rank two incidence geometry, then there is either no coset geometry that has $\Psi$ as its underlying incidence geometry, or there are infinitely many non-isomorphic coset geometries that have $\Psi$ as their underlying incidence geometry.
\end{proposition}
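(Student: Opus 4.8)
The plan is to prove the dichotomy by showing that if $\Psi$ admits even a single coset geometry, then it admits infinitely many pairwise non-isomorphic ones; the statement then follows, since the only other possibility is that it admits none. So assume $\Gamma = (G : G_0, G_1)$ is a coset geometry whose underlying incidence geometry is isomorphic to $\Psi$. By Proposition~\ref{propK} we have $\langle G_0, G_1 \rangle = G$, and I will treat this $\Gamma$ as a seed from which to grow an infinite family.

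First I would set up an inflation construction: for an arbitrary finite group $K$, put $\Gamma_K := (G \times K : G_0 \times K,\ G_1 \times K)$. Connectedness of $\Gamma_K$ is immediate from Proposition~\ref{propK}, since $\langle G_0 \times K,\, G_1 \times K \rangle = \langle G_0, G_1 \rangle \times K = G \times K$. The next step is to verify that $\Gamma_K$ still has underlying incidence geometry isomorphic to $\Psi$. I would do this by checking that the assignment $(a,k)(G_0 \times K) \mapsto aG_0$, together with the analogous one on lines, is a well-defined bijection from the points, respectively lines, of $\Gamma_K$ onto those of $\Gamma$, and that it preserves incidence: because the $K$-component of any coset $(a,k)(G_i \times K)$ is all of $K$, one has $(a,k)(G_0 \times K) \cap (b,\ell)(G_1 \times K) \neq \emptyset$ if and only if $aG_0 \cap bG_1 \neq \emptyset$. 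Hence the underlying geometry of $\Gamma_K$ is isomorphic to that of $\Gamma$, namely $\Psi$.

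Finally I would extract infinitely many pairwise non-isomorphic members of this family. Taking $K = C_p$ with $p$ prime, the group $G \times C_p$ has order $p\,|G|$, so $G \times C_p$ and $G \times C_q$ are non-isomorphic whenever $p \neq q$; since an isomorphism of coset geometries (in the sense defined above) in particular induces an isomorphism of the ambient groups, the coset geometries $\Gamma_{C_p}$ for $p$ prime are pairwise non-isomorphic, and each of them has $\Psi$ as underlying incidence geometry. This exhibits the required infinitude and completes the dichotomy.

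The only point that needs to be written with care --- rather than a genuine obstacle --- is the two-directional incidence check in the construction: one must confirm that adjoining the direct factor $K$ neither creates nor destroys incidences, and that the coset correspondence really is a bijection. Conceptually this is the same mechanism that underlies the passage from a coset geometry to its core-free quotient $\Gamma_{\Psi}$: passing to a quotient by --- or, as here, adjoining --- a normal subgroup contained in the Borel subgroup $G_{01}$ leaves the underlying incidence geometry unchanged. One could equally well phrase the whole argument around that observation, applied to the normal subgroup $1 \times K \leq (G_0 \times K) \cap (G_1 \times K)$ of $G \times K$.
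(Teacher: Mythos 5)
Your proposal is correct and uses essentially the same construction as the paper: inflating the given coset geometry to $(G\times H : G_0\times H, G_1\times H)$, which the paper does with an arbitrary nontrivial $H$ and you do with $H=C_p$ over varying primes to make the ``infinitely many non-isomorphic'' claim explicit. The only cosmetic difference is that the paper sets up the dichotomy via flag-transitivity (Propositions~\ref{propL} and~\ref{propM}) while you simply start from an assumed coset geometry, and you spell out the coset-correspondence and incidence checks that the paper leaves implicit; the mathematical content is the same.
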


\begin{proof}
Let $\Psi$ be a connected rank two incidence geometry.  By Proposition~\ref{propL}, if $\Psi$ is not flag transitive, then there is no coset geometry that has $\Psi$ as its underlying incidence geometry.  Assume that $\Psi$ is flag transitive.  By Proposition~\ref{propM}, there is a coset geometry $\Gamma_{\Psi}=(G:G_0,G_1)$ with underlying incidence geometry isomorphic to $\Psi$.  Now let $\Gamma'=(G\times H:G_0 \times H ,G_1 \times H)$, which is not isomorphic to $\Gamma$ as a coset geometry (as long as $H$ is nontrivial).  However, they yield the same incidence geometry $\Psi$. 
\end{proof}

\vspace{0.2cm}

\begin{proposition}~\label{propO}
Let $\Gamma=(G:G_0,G_1)$ be a connected coset geometry and $\Psi$ its incidence geometry.  The $G$-stabilizer of any point of $\Psi$ is isomorphic to $G_0$ and the $G$-stabilizer of any line of $\Psi$ is isomorphic to $G_1$. The stabilizer of each flag (fixing both endpoints) is isomorphic to $G_{01} = G_0 \cap G_1$. 
\end{proposition}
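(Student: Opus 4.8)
The plan is to read off each stabilizer directly from the coset description of $\Psi$ supplied by Propositions~\ref{propM} and~\ref{propL} (points are left cosets of $G_0$, lines are left cosets of $G_1$, flags are incident point--line pairs, and $G$ acts by left multiplication), and then to recognize each stabilizer as a conjugate of $G_0$, $G_1$, or $G_{01}$. Since conjugation by a fixed element of $G$ is an automorphism of $G$, conjugate subgroups are isomorphic, which is all that is claimed.

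First I would treat points. A point of $\Psi$ is a coset $aG_0$ with $a\in G$. An element $g\in G$ fixes $aG_0$ iff $gaG_0=aG_0$, i.e. iff $a^{-1}ga\in G_0$, i.e. iff $g\in aG_0a^{-1}$. Hence $Stab(G,aG_0)=aG_0a^{-1}$, and the inner automorphism $x\mapsto a^{-1}xa$ is an isomorphism $aG_0a^{-1}\to G_0$. The identical computation with $G_1$ in place of $G_0$ gives $Stab(G,bG_1)=bG_1b^{-1}\cong G_1$ for any line $bG_1$.

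For flags the only extra ingredient is the incidence relation. A flag is an edge of the Levi graph, i.e. an incident pair $(aG_0,bG_1)$, and by definition of incidence in a coset geometry this means $aG_0\cap bG_1\neq\emptyset$. The subgroup of $G$ fixing both endpoints of this flag is, by the previous paragraph, $aG_0a^{-1}\cap bG_1b^{-1}$. Choosing any $c\in aG_0\cap bG_1$ we have $aG_0=cG_0$ and $bG_1=cG_1$, so this intersection equals $cG_0c^{-1}\cap cG_1c^{-1}=c(G_0\cap G_1)c^{-1}=cG_{01}c^{-1}$, again isomorphic to $G_{01}$ via conjugation by $c$.

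I do not expect any real obstacle: the statement is essentially a restatement of the left-multiplication action of $G$ on cosets, and the single small idea needed is to use the incidence hypothesis to replace the two representatives $a$ and $b$ by a common representative $c$ of the flag, after which the flag stabilizer visibly collapses to a conjugate of $G_0\cap G_1$. Connectedness of $\Gamma$ together with Proposition~\ref{propL} makes $G$ transitive on points, on lines and on flags, so the phrases ``any point'', ``any line'' and ``each flag'' are unambiguous, but transitivity is not itself used in the isomorphism-type computation.
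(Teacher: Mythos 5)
Your proof is correct, and it takes a genuinely different route from the paper's. The paper does not compute stabilizers from the coset description at all: for points and lines it simply invokes Propositions~\ref{propL} and~\ref{propN} (the geometry is $G$-flag-transitive and the stabilizers of a black and a white vertex of the Levi graph are the maximal parabolics), and for the flag it works with the base flag $\{p,l\}$ whose endpoint stabilizers are literally $G_0$ and $G_1$, proving $Stab(G,\{p,l\})=G_0\cap G_1$ by a double inclusion in which the key step is that color-preserving automorphisms cannot swap the two endpoints of an edge; transitivity is then what carries this to an arbitrary flag. You instead identify every stabilizer explicitly as a conjugate: $Stab(G,aG_0)=aG_0a^{-1}$, $Stab(G,bG_1)=bG_1b^{-1}$, and, after replacing $a$ and $b$ by a common representative $c\in aG_0\cap bG_1$, the flag stabilizer is $c(G_0\cap G_1)c^{-1}$. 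Your version is more self-contained and slightly stronger in what it exhibits (the stabilizers are conjugate, not merely isomorphic, and no appeal to flag-transitivity or to the graph-theoretic propositions is needed), at the cost of fixing the left-multiplication action explicitly; the paper's version is shorter given its earlier machinery and isolates the one genuinely geometric point, namely that fixing an edge of the Levi graph with a color-preserving automorphism forces fixing both of its endpoints, which is the observation the authors reuse in the remark following the proposition.
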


\begin{proof}
Following Proposition~\ref{propL}, the incidence geometry $\Psi$ is $G$-flag-transitive, which by Proposition~\ref{propN} implies that the $G$-stabilizer of any point of $\Psi$ is isomorphic to $G_0$ and also that the $G$-stabilizer of any line of $\Psi$ is isomorphic to $G_1$.

Let $G_{01}= Stab(Aut_o(X), \{p,l\})$, be the stabilizer of a flag. Assume now that $G'_{01} = G_0 \cap G_1$.
If $\alpha \in G'_{01}$ then $\alpha(p)=p$ and $\alpha(l)=l$ which implies that $\alpha(\{p,l\})=\{p,l\}$. Hence $G'_{01} \leq G_{01}$. On the other side, if $\alpha \in G_{01}$  then $\alpha(\{p,l\})=\{p,l\}$. Since the automorphisms are color preserving it implies that $\alpha(p)=p$ and $\alpha(l)=l$. Hence $G_{01} \leq G'_{01}$.
\end{proof}
The proof of this proposition implies that  if we require that $G \cong G'$, then two coset geometries $\Gamma=(G:G_0,G_1)$ and $\Gamma'=(G':G_0',G_1')$, 
with the same incidence geometries, are always isomorphic.\\

Given the Levi graph $X$ of a coset geometry $\Gamma=(G:G_0,G_1)$, the group $G$ is not always isomorphic to a subgroup of $Aut(X)$.  There is still a natural action of $G$ on $X$; however this action is not always faithful.  The following theorem gives a necessary and sufficient condition for a faithful action.

\begin{theorem}
Let $\Gamma=(G:G_0,G_1)$ be a connected coset geometry with Levi graph $X$; then $G_0\cap G_1$ is core-free in $G$ if and only if $G$ acts faithfully on the edge set of $X$.
\end {theorem}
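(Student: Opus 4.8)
The plan is to unwind both conditions in terms of the natural action of $G$ on the flags (equivalently, edges) of $X$, which are in bijection with the left cosets of $G_{01} = G_0 \cap G_1$ by Proposition~\ref{propM}. The action of $G$ on the edge set of $X$ is then exactly the action of $G$ on the left coset space $G/G_{01}$. So the theorem reduces to the standard fact that the action of a group $G$ on $G/H$ is faithful if and only if $H$ is core-free, i.e. $\bigcap_{g\in G} gHg^{-1} = \{1\}$. I would state this as the engine of the proof and then supply the short argument.

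First I would fix a reference flag (base edge) $\{b,w\}$ of $X$ with $b$ black, $w$ white, and recall from Proposition~\ref{propO} that the stabilizer of this flag in $G$ is $G_{01}$. Since $\Gamma$ is connected, Proposition~\ref{propL} gives that $G$ acts flag-transitively on $X$; hence the $G$-set of edges of $X$ is isomorphic to $G/G_{01}$ as a $G$-set. Therefore the kernel of the action of $G$ on the edge set of $X$ equals the kernel of the action of $G$ on $G/G_{01}$, which is precisely $\bigcap_{g \in G} g G_{01} g^{-1}$, the core of $G_{01}$ in $G$.

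From here both directions are immediate: $G$ acts faithfully on the edge set of $X$ if and only if this kernel is trivial, if and only if the core of $G_{01} = G_0\cap G_1$ in $G$ is trivial, which is by definition saying $G_0 \cap G_1$ is core-free in $G$. I would write out the elementary verification that the kernel of the $G$-action on $G/H$ is $\bigcap_g gHg^{-1}$ (an element $x$ fixes every coset $gH$ iff $g^{-1}xg \in H$ for all $g$), so the proof is self-contained.

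**Main obstacle.** There is no serious obstacle; the only thing to be careful about is the identification of "acting faithfully on the edge set of $X$" with "acting faithfully on $G/G_{01}$". This uses that the $G$-action on edges is transitive (connectedness, via Proposition~\ref{propL}) and that edge-stabilizers are conjugates of $G_{01}$ (via Proposition~\ref{propO}); I would make sure to invoke these explicitly. One should also note that $G$ indeed acts on $X$ as automorphisms of the underlying incidence geometry — but since dualities are excluded here and we only need the action on edges, the action is well-defined on the edge set regardless, so this is not a genuine complication.
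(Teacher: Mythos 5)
Your proof is correct and follows essentially the same route as the paper: both rest on identifying the edges of $X$ with the left cosets of $G_{01}=G_0\cap G_1$ and observing that the kernel of the action on these cosets is the core of $G_{01}$. The only difference is cosmetic — the paper cites \cite{GLP2004} for the core-free $\Rightarrow$ faithful direction (via the actions on cosets of $G_0$ and $G_1$) and argues the converse exactly as you do, whereas your unified kernel-of-the-coset-action argument handles both directions self-containedly.
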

\begin{proof}
In~\cite{GLP2004} it is proved that if $G_0\cap G_1$ is core-free in $G$, then $G$ acts faithfully on the cosets of $G_0$ and the cosets of $G_1$ in $G$, and thus acts faithfully on the edges of $X$.  To prove the converse, 
let $G_{01}=G_0\cap G_1$ and assume $G_{01}$ is not core-free in $G$.  This implies there exists a nontrivial element $g \in \underset{f \in G}{\bigcap} f G_{01} f^{-1}$.  Thus, this $g$ fixes left cosets $fG_{01}$ for all $f \in G$.  As we saw in Proposition~\ref{propM}, these left cosets determine the edges of $X$.  Therefore $g$ fixes all the edges of $X$, and the action of $G$ on $X$ is not faithful.
\end{proof}

We denote any rank two coset geometry $\Gamma=(G:G_0,G_1)$ with the property that $G_0\cap G_1$ is core-free in $G$ as a \textit{core-free coset geometry}.

\vspace{0.2cm}

\begin{corollary}~\cite{GLP2004}
Let $X$ be the Levi graph of a core-free coset geometry $\Gamma=(G:G_0,G_1)$.  Then $G$ is a subgroup of $Aut_0(X)$, and $X$ is $G$-edge transitive and $G$-vertex intransitive.  
\end{corollary}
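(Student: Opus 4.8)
The plan is to unwind the definition of the Levi graph of $\Gamma=(G:G_0,G_1)$ and exhibit the obvious action of $G$ on it. Recall that the vertices of $X$ are the left cosets of $G_0$ (the black/point vertices) together with the left cosets of $G_1$ (the white/line vertices), and that two cosets $aG_0$, $bG_1$ span an edge precisely when $aG_0\cap bG_1\neq\emptyset$. First I would check that left multiplication by $g\in G$ gives a graph automorphism: it sends $aG_0\mapsto gaG_0$ and $bG_1\mapsto gbG_1$, and since $gaG_0\cap gbG_1=g(aG_0\cap bG_1)$, incidence is preserved; moreover this automorphism carries point vertices to point vertices and line vertices to line vertices, so it lies in $Aut_o(X)$. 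This yields a homomorphism $G\to Aut_o(X)$.

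Next I would argue that this homomorphism is injective, i.e.\ that $G$ really is (isomorphic to) a subgroup of $Aut_o(X)$. By the Theorem preceding this corollary, since $G_0\cap G_1$ is core-free in $G$ the action of $G$ on the edge set of $X$ is faithful; since the edges of $X$ carry the left cosets of $G_{01}=G_0\cap G_1$ (Proposition~\ref{propM}) and $G$ acts on them by left multiplication, the kernel of $G\to Aut_o(X)$ is trivial. Hence $G\hookrightarrow Aut_o(X)$.

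For $G$-edge-transitivity I would simply invoke Proposition~\ref{propL}: the incidence geometry $\Psi$ of $\Gamma$ is $G$-flag-transitive, and the flags of $\Psi$ are exactly the edges of its Levi graph $X$, so $G$ is transitive on the edges of $X$. Finally, for $G$-vertex-intransitivity, note that a rank two coset geometry always has at least one point vertex ($eG_0$) and at least one line vertex ($eG_1$), and that the point vertices and the line vertices form disjoint, non-empty subsets of $V(X)$ each invariant under the (color-preserving) action of $G$; therefore $G$ has at least two vertex orbits and cannot be vertex-transitive.

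I expect no serious obstacle here: the only point requiring care is the bookkeeping of what the vertices and edges of the Levi graph ``are'' as $G$-sets, together with the observation that the natural $G$-action is automatically color-preserving rather than merely color-respecting, which is what places $G$ inside $Aut_o(X)$ and not just $Aut(X)$. (As agreed in the text, one works throughout with connected $\Gamma$, so that the preceding Theorem applies verbatim.)
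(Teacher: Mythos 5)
Your argument is correct, and it follows exactly the route the paper intends: the paper states this as a corollary of the preceding faithfulness theorem and cites \cite{GLP2004} without writing out a proof, and your derivation --- the left-multiplication action on cosets is color-preserving, its kernel is trivial because it is contained in the kernel of the edge action (which is trivial by the theorem, under the standing connectedness assumption), edge-transitivity comes from flag-transitivity of the coset geometry (Proposition~\ref{propL}, or directly from transitivity on cosets of $G_{01}$), and vertex-intransitivity from the invariance of the two nonempty color classes --- is precisely the intended unwinding. No gaps; the one point needing care (faithfulness on edges versus on vertices) is handled correctly.
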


\subsection{A non-stable coset geometry. \label{examplehemi}}  

As we have noted, incidence geometries are more general objects than coset geometries.  Moreover, if an incidence geometry is flag-transitive it gives rise to a coset geometry.  However, it can happen that many non-isomorphic coset geometries have the same associated incidence geometry.  We have already seen this to be the case where $G$ is too large to be isomorphic to a subgroup of $Aut_o(\Psi)$.  It can also happen, if $\Psi$ is the associated incidence geometry for a coset geometry $\Gamma=(G:G_0,G_1)$, that $G$ is a proper subgroup of $Aut_o(\Psi)$, and thus the coset geometry constructed using Proposition~\ref{propN} is not isomorphic to $\Gamma$. 

To demonstrate this, we consider the 1-skeleton of the hemidodecahedron as a coset geometry (see for example \cite{Bue85}).  More precisely, let $G \cong Alt(5)$ be generated by $s_0 = (2,3)(4,5)$, 
$s_1= (1,2)(3,4)$, and $s_2= (2,5)(3,4).$  If we let $G_0=\langle s_1,s_2 \rangle \cong S_3$ and $G_1=\langle s_0,s_2  \rangle \cong 2^2$, then the incidence graph for the coset geometry $\Gamma=(Alt(5):G_0,G_1)$ is isomorphic to the subdivision of the Petersen graph.  Thus, $Aut_o(\Psi) \cong Sym(5)$, and we can construct a new coset geometry $\Gamma'=(Sym(5),H_0,H_1)$ as demonstrated in Proposition~\ref{propN}.

\subsection{Isomorphism and Conjugacy}
We can choose different notions of what it means for geometries to be equivalent.  We have defined isomorphisms of incidence geometries as a color-preserving isomorphisms of bipartite graphs.  For two coset geometries $\Gamma=(G:G_0,G_1)$ and $\Gamma'=(G':G_0',G_1')$ to be equivalent, it must be the case that the groups $G \cong G'$.  \\
Then, we have at least one new way of defining equivalence between coset geometries.  We recall that two coset geometries $\Gamma=(G:G_0,G_1)$ and $\Gamma'=(G:G'_0,G'_1)$ are defined to be isomorphic if there is an element of the automorphism group of $G$ that sends $G_0$ and $G_1$ to $G_0'$ and $G_1'$ respectively.  On the other hand, we define two coset geometries $\Gamma$ and $\Gamma'$ to be conjugate if there is an element of the inner automorphism group of $G$ that sends $G_0$ and $G_1$ to $G_0'$ and $G_1'$ respectively.
\begin{proposition}
There exits isomorphic coset geometries which are not conjugate.
\end{proposition}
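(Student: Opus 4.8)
The plan is to produce a single group $G$ with two subgroups $G_0,G_1$ and an automorphism $\varphi\in Aut(G)$ such that the coset geometries $\Gamma=(G:G_0,G_1)$ and $\Gamma'=(G:\varphi(G_0),\varphi(G_1))$ are isomorphic — the witness being $\varphi$ itself — while no element of the inner automorphism group of $G$ conjugates the pair $(G_0,G_1)$ onto $(\varphi(G_0),\varphi(G_1))$. The key reduction is this: if such an inner automorphism existed, then in particular $\varphi(G_0)$ would be conjugate to $G_0$ in $G$, so it suffices to choose $\varphi$ and $G_0$ with $\varphi(G_0)$ in a different $G$-conjugacy class of subgroups from $G_0$. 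This forces $\varphi$ to be outer, and reduces the whole statement to exhibiting a group possessing an outer automorphism that moves some subgroup conjugacy class, after which only a few routine checks remain.

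For a concrete example fitting the theme of this paper I would take $G=Sym(6)$ and let $\varphi$ be its exceptional outer automorphism, using the classical fact that $\varphi$ sends a transposition to a product of three disjoint transpositions, hence sends a point stabilizer $Stab(G,i)\cong Sym(5)$, which is intransitive, to a transitive subgroup isomorphic to $Sym(5)$; these two conjugacy classes of $Sym(5)$-subgroups are distinct, one being intransitive and the other transitive, and no element of $Sym(6)$ conjugates one onto the other. Put $G_0=Stab(G,1)$ and $G_1=Stab(G,2)$. One checks routinely that $\langle G_0,G_1\rangle=Sym(6)$, so $\Gamma$ is connected by Proposition~\ref{propK}, and that $G_0\cap G_1=Sym(\{3,4,5,6\})\cong Sym(4)$ has trivial core in $Sym(6)$ (its core is a normal subgroup of $Sym(6)$ contained in $Sym(4)$, hence trivial), so $\Gamma$ is a connected core-free coset geometry; applying $\varphi$ shows the same for $\Gamma'$. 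Then $\varphi$ witnesses $\Gamma\cong\Gamma'$, but $\Gamma$ and $\Gamma'$ are not conjugate, since an inner automorphism realising the conjugacy would have to carry the intransitive $Sym(5)$-subgroup $G_0$ to the transitive $Sym(5)$-subgroup $\varphi(G_0)$, which is impossible.

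The same idea also yields a minimal witness: take $G=C_2\times C_2=\langle a,b\rangle$, $G_0=\langle a\rangle$, $G_1=\langle ab\rangle$, and let $\varphi$ interchange $a$ and $b$; then $\Gamma=(G:\langle a\rangle,\langle ab\rangle)$ and $\Gamma'=(G:\langle b\rangle,\langle ab\rangle)$ are both connected with trivial (hence core-free) Borel subgroup, they are isomorphic via $\varphi$, yet they are not conjugate because the inner automorphism group of $G$ is trivial and $\langle a\rangle\neq\langle b\rangle$. In either case the only point requiring genuine attention is the verification that the chosen automorphism really moves the conjugacy class of $G_0$ — for $Sym(6)$ this is the classical property of its outer automorphism, and for $C_2\times C_2$ it is immediate — while connectedness, the identification of the Borel subgroup, core-freeness, and the isomorphism $\Gamma\cong\Gamma'$ are short and direct.
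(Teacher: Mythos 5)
Your proof is correct, and it rests on the same mechanism as the paper's: produce two coset geometries related by an automorphism of $G$ (so they are isomorphic by definition) while arranging that $G_0$ and its image lie in different conjugacy classes of subgroups of $G$, so no inner automorphism can realise the equivalence. The difference is purely in the witness. The paper quotes a known example from Buekenhout's work on $M_{12}$: the two non-conjugate classes of subgroups isomorphic to $M_9 \ltimes S_3$ give $\Gamma=(M_{12}:K(99),K(95))$ and $\Gamma'=(M_{12}:\overline{K(99)},h^{-1}K(95)h)$, isomorphic but not conjugate; the non-conjugacy of the two classes is taken from the literature rather than argued. Your $Sym(6)$ example is self-contained modulo the classical fact that the exceptional outer automorphism carries the intransitive point stabilizers $Sym(5)$ to transitive copies, and your checks (generation, $G_{01}\cong Sym(4)$, triviality of its core) are all correct; your Klein four-group example is even more elementary, since there the inner automorphism group is trivial and conjugacy collapses to equality of the subgroup pairs, so non-conjugacy is immediate. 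Note that the proposition only asserts existence, so connectedness and core-freeness are not needed, though verifying them (as you do) keeps the examples within the class of geometries the paper cares about. What the paper's choice buys is a ``real'' geometric example tied to the sporadic-group literature it builds on; what yours buys is a proof that can be verified from scratch in a few lines.
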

\begin{proof}
We prove this proposition by providing the following example (see~\cite{Bue82}).  Let $\Gamma=(M_{12}:K(99),K(95))$ and $\Gamma'=(M_{12}:\overline{K(99)},h^{-1}K(95)h)$; here $K(99)$ is isomorphic to $M_9 \ltimes S_3$, $\overline{K(99)}$ is isomorphic to a non-conjugate copy of $M_9 \ltimes S_3$, $K(95)$ is isomorphic to $M_8 \ltimes S_4$, and $h$ is any element of $M_{12}$.  In this example $\Gamma$ and $\Gamma'$ are isomorphic, but not conjugate.  Although $G_1$ and $G_1'$ are conjugate in $G$, $G_0$ and $G_0'$ are not. 
\end{proof}

\section{Geometric properties of edge-transitive bipartite graphs}

For a given bipartite, edge-transitive graph $X$ with a fixed black and white coloring, we may try to determine all subgroups $H$ of $Aut_o(X)$ that act transitively on the edge set of $X$.   As we have seen earlier, there are three more groups of interest here: $H_0,H_1,H_{01}$: the stabilizer 
in $H$ of a black vertex, of a white vertex, and of an edge, respectively. 

There are many axioms to verify on coset geometries (see for example \cite{BCDL2001}), therefore it is important to investigate the relations between the groups  $H_0,H_1,H_{01}$ and $H$ for each coset geometry. 
Indeed, for each transitive subgroup $H$ we ask the following:

\begin{enumerate}
\item Is $H_{01}$ maximal in $H_0$? Does $H_0$ act doubly transitively on the set of neighbors of a black vertex?
\item Is $H_{01}$ maximal in $H_1$? Does $H_1$ act doubly transitively on the set of neighbors of a white vertex?
\item Is $H_0$ maximal in $H$? Does $H$ act doubly transitively on the set of black vertices?
\item Is $H_1$ maximal in $H$? Does $H$ act doubly transitively on the set of white vertices?
\item Is $H$ normal in $Aut_o(X)$?
\item Is $H_0$ normal in $Aut_o(X)$?
\item Is $H_1$ normal in $Aut_o(X)$?
\item Is $H_{01}$ normal in $Aut_o(X)$?
\item Is $X$ $H$-self-dual?  If so, what is the smallest order of a duality? 
\item What is the maximum order of an element of $H$?
\end{enumerate}

The answer to these questions for many graphs 
can be found online at:
\begin{center}  http://dev.ulb.ac.be/solvay/math/geomgr.html \end{center}

\section{Lists of edge-transitive graphs}

The most well-known census of arc-transitive trivalent graph is the Foster Census~\cite{BouwerFoster}. 
It contains arc-transitive cubic graphs and has been recently vastly extended by Marston Conder~\cite{ConderFoster}. 
The case of semi-symmetric cubic graph was studied in~\cite{ConderSemi3}.
Iofinova and Ivanov~\cite{II} showed that there exist exactly five bipartite cubic semisymmetric graphs whose automorphism groups preserves the bipartite parts and acts primitively on each part.  
For each bipartite graph $X$ in these  censuses, we have constructed all possible core-free coset geometries with Levi graph isomorphic to $X$.

Quartic edge-transitive graphs have been extensively studied by Poto\v{c}nik in ~\cite{primoz4valent}, and a census of these graphs can be found in~\cite{pw4census}.  For each bipartite graph $X$ in this census which has a vertex stabilizer of size less than 100,  we have constructed all possible core-free coset geometries with Levi graph isomorphic to $X$.

\section{Rank two geometries and configurations of points and lines}
Rank two geometries that have semi-regular bipartite Levi graphs of girth at least 6 are also known as combinatorial
configurations of points and lines. For such geometries a natural question to ask is whether they admit geometric realization as configurations of points and lines in the Euclidean plane. Edge-transitive bipartite Levi graphs give some promises that methods of polycyclic configurations~\cite{BobenPisanski} could apply in many cases. The case of quartic half-arc-transitive Levi graphs has been studied in~\cite{MarusicPisanski}. For basics of geometric configurations of points and lines the reader is referred to Gr\" unbaum~\cite{Grunbaum}. 

An interesting question arises from the study of cyclic configurations; see, for instance,~\cite{Grunbaum,HMP}.
\begin{problem}
Determine all cubic and quartic flag-transitive cyclic configurations. 
\end{problem}

\section{Experimental result}\label{results}

Using Magma~\cite{BCP97}, for each graph $X$ in various families of edge transitive graphs, a number of coset geometries are constructed using edge transitive subgroups $H$ of $Aut_o(X)$.  The groups $H_0$, $H_1$, and $H_{01}$ are constructed as in Proposition~\ref{propN}. For each coset geometry we are interested in the relationships between the groups.  Some of this information is contained in the columns labeled ``Max" and ``Norm."

{The ``Max" column has a sequence of four elements, corresponding to the four group relationships $H_{01} \leq H_0$, $H_{01} \leq H_1$, $H_{0} \leq H$, and $H_{1} \leq H$ respectively.
The ``Max" column has a sequence of four elements from the possibilities: ``M" which stands for Maximal but not two transitive, ``2T" which stands for 2-transitive, and ``X" which stands for neither maximal nor 2-transitive.
The ``Norm" column has a sequence of four elements, corresponding to the four group relationships $H \leq Aut_o(X)$, $H_0 \leq Aut_o(X)$, $H_1 \leq Aut_o(X)$, and $H_{01} \leq Aut_o(X)$ respectively.
The ``Norm" column tells which of these four subgroups are normal.  The ``Dual" column shows if the coset geometry is self-dual, and if so the minimal order of a duality.  Finally, the ``MaxOrd" column tells the maximum order of an element in $H$. 

Here, we provide further analysis of the ``Max" and ``Norm" columns (for both 3- and 4-valent symmetric graphs), giving the number of times each outcome appears, and the structure of groups in unique cases.
We also give two explanations of why some possibilities in this column do not appear. 

\vspace{.5cm}
\begin{center}
\begin{tiny}
\begin{tabular}{cccc}
\begin{tabular}{|c|c|} \hline
3-valent Sym.& Max\\ \hline
2T2TXX &315 \\ \hline
2T2T2T2T & 4 \\ \hline
 MMXX & 587 \\ \hline
 M2TXX & 32 \\ \hline
  2T2TMM & 21 \\ \hline
  M2T2TM & 1  \\ \hline
  2TMXX & 32 \\ \hline
  2TMM2T &  1 \\ \hline
  MMMM & 88 \\ \hline
  MM2T2T & 1 \\ \hline
\end{tabular}
&
\begin{tabular}{|c|c|} \hline
3-valent Sym.& Norm\\ \hline
YNYN&1 \\ \hline
YYYY&1 \\ \hline
YNNY&674 \\ \hline
NNNY&1 \\ \hline
YYNN&1 \\ \hline
YNNN&404 \\ \hline
\end{tabular}
&
\begin{tabular}{|c|c|} \hline
4-valent Sym.& Max\\ \hline
2TXX2T &11\\ \hline
2TXXM &1\\ \hline
X2TXX &26\\ \hline
2TXMM & 1\\ \hline
2TXXX &19\\ \hline
XX2T2T &1\\ \hline
XXMM &17\\ \hline
2T2TMM &5\\ \hline
X2T2TX &11\\ \hline
2T2TXX &38\\ \hline
XXXX &1503\\ \hline
2T2T2T2T &10\\ \hline
\end{tabular}
&
\begin{tabular}{|c|c|} \hline
4-valent Sym.& Norm\\ \hline
YNNN & 466\\ \hline
NNYN  & 4\\ \hline
YNNY &525\\ \hline
NYNN&4\\ \hline
NNYY&3\\ \hline
NYNY&3\\ \hline
YYNN&2\\ \hline
YYYY&1\\ \hline
YNYN&2\\ \hline
NNNN&365\\ \hline
NNNY&268\\ \hline
\end{tabular}
\end{tabular}
\end{tiny}
\end{center}

\vspace{.5cm}

In the 3-valent symmetric case, if we look at all possible sequences of four elements in the ``Max" column, we can rule out  the sequences with an X in the first or second position.
Indeed, these are $3$-valent graphs, which implies that $H_{01}$ is always maximal in $H_0$ and in $H_1$, since 3 is prime.

In the census of 3-valent symmetric graphs, three of the ten possible sequences in the ``Max" column only appear once.
Two of them (``2TMM2T" and ``M2T2TM") arise from coset geometries that are duals of each other:
$(S_3 \times C_3 : C_3\times 2, S_3)$ and $(S_3 \times C_3 :  S_3,C_3\times 2)$, with $H_{01}=C_2$. 
These geometries arise from the complete bipartite graph $K_{3,3}$.
The third possibility ``MM2T2T", comes from the cubical graph, giving the unique geometry $(A_4: C_3,C_3) $.
We note that none of these three sequences appears in the 4-valent case.

Looking at the 1643 geometries that we analyze from 4-valent symmetric graphs, we make the observation that M cannot appear as one of the first 2 elements in the ``Max" column. 
This follows from the fact that the number of cosets of $H_{01}$ in $H_i$ with $i=0,1$ equals four; also if $H_{01}$ is a maximal subgroup of $H_i$ with $i=0$ or $1$, then $H_i$ acts primitively on the cosets of $H_{01}$.
There are only two primitive groups acting on 4 points, namely $A_4$ and $S_4$, both of which are 2-transitive groups.

There are three sequences that appear only once in the ``Max" column.
The sequence ``XX2T2T" arises from the graph C4[10.2] and gives the coset geometry $((C_5:C_4): C_4, C_4)$.

The two others arise from the graph C4[110.7] giving the coset geometries $(PGL(2,11) : S_4, D_{6})$ and $(PSL(2,11) : A_4, D_{3})$.\\

When ``YYYY" appears in the ``Norm" column there is a unique geometry
in our list coming from a 4-valent graph, and a unique geometry in our
list coming from a 3-valent graph.  The graph ``C4[8.1]" in our tables
is the complete bipartite graph $K_{4,4}$.  It gives rise to the coset
geometry $(C_2^2 \times C_2^2: C_2^2, C_2^2)$.

Similarly, the complete bipartite graph $K_{3,3}$ gives rise to a few
coset geometries that are unique in the ``Norm" column $(C_3 \times
C_3: C_3, C_3)$ for ``YYYY", $(S_3 \times C_3: S_3, C_3 \times 2)$ for
``YYNN", and $(S_3 \times C_3: C_3 \times 2, S_3)$ for ``YNYN".

When ``NNNY" appears in the ``Norm" column there is a unique geometry
in our list coming from a 3-valent graph.
The Heawood graph C14.1 provides the unique coset geometry in our
table $((C_7:C_3):C_3,C_3)$.\\

We also point out that the ``Max" column does not fix the ``Norm" column (and vise versa).  This can be seen through the previous examples.

\section{Concluding remarks}
In this paper we established an exact connection between rank two core-free coset geometries and two-colored edge-transitive graphs. 
We focused on example of 3 and 4 regular graphs.  When constructing coset geometries from 4-valent edge transitive graphs, we noticed that some of these graphs have abundantly large automorphism groups.  More precisely, the stabilizer of a vertex is very large in these cases.  Many of these graphs are labeled as ``unworthy graphs" in \cite{pw4census}; however there are other graphs with very large automorphism groups that are ``worthy."  These 4-valent graphs, with
large automorphism groups, have been studied in \cite{cptwiceprimevalency}. Also, in \cite{primozgabriel1, primozgabriel2} 
it is shown that if a graph with such a large automorphism group is also arc transitive, then it belongs to a well understood infinite family of graphs.

This leads to an interesting question.
\begin{problem}
Classify the semi-symmetric graphs with large vertex stabilizers.
Similarly, classify core-free rank two coset geometries coming from these graphs.
\end{problem}

\section{acknowledgement}

We acknowledge financial support from the Fonds de la Recherche Scientifique (FNRS), which facilitated this collaboration.  
We also acknowledge ``Communaut\'e Francaise de Belgique-Action de Recherche Concert\'e" et le Fond David et Alice Van Buuren.
This research was supported in part by ARRS grant P1-0294 and the ESF grant EuroGIGA/GReGAS.
Also, we thank Primo\v{z} Poto\v{c}nik for invaluable discussions, and comments on early drafts of this paper.

\thebibliography{99}

\bibitem{Big93}
N.~Biggs.
\newblock {\em Algebraic graph theory. Second edition.}.
\newblock Cambridge Mathematical Library. Cambridge University Press, Cambridge, 1993. 

\bibitem{BobenPisanski}
M.~Boben, T.~Pisanski.
\newblock Polycyclic configurations. (English summary) 
\newblock {\em European J. Combin.}, 24 no. 4:431-457, 2003.

\bibitem{BCP97}
W.~Bosma, J.~Cannon, and Catherine Playoust.
\newblock The {M}agma {A}lgebra {S}ystem {I}: the user language.
\newblock {\em J. Symbolic Comput.}, (3/4):235--265, 1997.

\bibitem{BouwerFoster} I.~Z.~Bouwer (Ed.), {\em The Foster Census\/}, Charles
Babbage Research Centre, Winnipeg, 1988.

\bibitem{Bou70}
I.~Z.~Bouwer. 
\newblock Vertex and edge transitive, but not 1-transitive graphs.
\newblock {\em Canadian Math. Bull.}, 13:231-237, 1970.

\bibitem{Bue82}
F.~Buekenhout.
\newblock Geometries for the Mathieu group $M_{12}$.
\newblock  {\em Combinatorial Theory}, vol 969, pp 74-85,  
\newblock Lecture Notes in Math., Springer, Berlin, 1982.

\bibitem{Bue85}
F.~Buekenhout.
\newblock Diagram geometries for sporadic groups.
\newblock {\em Contemp. Math.}, 45:1--32, 1985.

\bibitem{Bue86}
F.~Buekenhout.
\newblock The geometry of the finite simple groups.
\newblock In Rosati L.A., editor, {\em Buildings and the geometry of diagrams},
  volume 1181, pages 1--78, 1986.

\bibitem{Bue95a}
F.~Buekenhout.
\newblock Finite groups and geometry: A view on the present state and the
  future.
\newblock In W.M. Kantor and L.~Di Martino, editors, {\em Groups of Lie type
  and their geometries}, pages 35--42, 1995.

\bibitem{Bue95}
F.~Buekenhout, editor.
\newblock {\em Handbook of Incidence Geometry. {B}uildings and Foundations}.
\newblock Elsevier, Amsterdam, 1995.

\bibitem{BCD96a}
F.~Buekenhout, P.~Cara, and M.~Dehon.
\newblock Geometries of small almost simple groups based on maximal subgroups.
\newblock {\em Bull. Belg. Math. Soc. - Simon Stevin Suppl.}, 1998.

\bibitem{BCDL2001}
F.~Buekenhout, P.~Cara, M.~Dehon, and D.~Leemans.
\newblock Residually weakly primitive geometries of small sporadic and almost
 simple groups : a synthesis.
\newblock In A.~Pasini, editor, {\em Topics in Diagram Geometry}, volume~12 of
 {\em Quaderni Mat.}, pages 1--27. 2003.

\bibitem{BP95}
F.~Buekenhout and A.~Pasini.
\newblock Finite diagram geometry extending buildings.
\newblock In {\em Handbook of incidence geometry: buildings and foundations},
  chapter~22, pages 1143--1254. North-Holland, 1995.

\bibitem{Cam99}
P.J.~Cameron.
\newblock {\em Permutation groups}.
\newblock  Cambridge University Press, New York, 1999.

\bibitem{ConderSemi3}
M.~Conder,  A.~Malni\v c,  D.~Maru\v{s}ic, P.~ Poto\v{c}nik , "A census of semisymmetric cubic graphs on up to 768 vertices", Journal of Algebraic Combinatorics 23: 255‚Äì294., 2006.

\bibitem{ConderFoster} M.~Conder and P.~Dobcsanyi, {\em Trivalent symmetric
graphs on up to 768 vertices\/}, J. Combin. Math. Combin. Comput. 40
(2002), 41--63.

\bibitem{DS2010}
J.~De Saedeleer.
\newblock {\em {The residually weakly primitive and locally two-transitive rank two geometries for the groups {${\rm PSL}(2,q)$}}}.
\newblock PhD thesis, Universit\'e Libre de Bruxelles, 2010.

\bibitem{DSL10}
J.~De Saedeleer and D.~Leemans.
\newblock On the rank two geometries of the groups {${\rm PSL}(2,q)$}: part {I}.
\newblock {\em Ars Mathematica Contemporanea}, 3, no. 2, 177-192, 2010.

\bibitem{Deh94}
M.~Dehon.
\newblock Classifying geometries with {C}ayley.
\newblock {\em J. Symbolic Comput.}, 17: 259--276, 1994.
 
\bibitem{GLP2004}
M.~Giudici, C.~H. Li, and Cheryl~E. Praeger.
\newblock Analysing finite locally {$s$}-arc transitive graphs.
\newblock {\em Trans. Amer. Math. Soc.}, 356(1):291--317 (electronic), 2004.

\bibitem{Grunbaum}
B.~Gr\" unbaum.
\newblock{\em Configurations of points and lines.}
\newblock {Graduate Studies in Mathematics 103, American Mathematical Society}: Providence, 2009.

\bibitem{HMP}
M.~Hladnik, D.~Maru\v si\v c, T.~Pisanski.
\newblock Cyclic Haar Graphs.
\newblock  Algebraic and topological methods in graph theory (Bled, 1999).
{\em Discrete Math.} 244, no. 1-3: 137-152, 2002.

\bibitem{II}
Iofinova, M. E. and Ivanov, A. A.
\newblock {\em Bi-Primitive Cubic Graphs.}
\newblock  In Investigations in the Algebraic Theory of Combinatorial Objects. pp. 123-134, 2002. (Vsesoyuz. Nauchno-Issled. Inst. Sistem. Issled., Moscow, pp. 137-152, 1985.) 

  
\bibitem{Lee97c}
D.~Leemans.
\newblock An atlas of regular thin geometries for small groups.
\newblock {\em Math. Comput.}, 68(228):1631--1647, 1999.

\bibitem{Lev42}
F.~W.~Levi.
\newblock {\em Finite geometrical systems.}
\newblock Calcutta, 1942.

\bibitem{MS2002}
P.~McMullen, E.~Schulte.
\newblock Abstract Regular Polytopes.
\newblock {\em Encyclopedia Math. Appl.}, Cambridge University Press, Cambridge, vol 92, 2002. 

\bibitem{Mar98}
D.~Maru\v si\v c.
\newblock Recent developments in half-transitive graphs.
\newblock {\em Discrete Math.}, 182, no. 1-3, 219ñ231, 1998.

\bibitem{MarusicPisanski}
D. Maru\v si\v c, T. Pisanski.
\newblock Weakly Flag-transitive Configurations and Half-arc-transitive Graphs.
\newblock {\em European J. Combin.}, no. 20, 559-570, 1999.

\bibitem{MPSW2007}
B.~Monson, T.~Pisanski, E.~Schulte, A.~Weiss.
\newblock Semisymmetric graphs from polytopes. 
\newblock {\em J. Combin. Theory}, Ser. A 114, no. 3, 421-435, 2007.

\bibitem{primoz4valent}
Primoz Potocnik.
\newblock A list of 4-valent 2-arc-transitive graphs and finite faithful amalgams of index (4, 2). 
\newblock {\em European J. Combin.} 30(5): 1323-1336 (2009).

\bibitem{primozgabriel1}
 P.~Poto\v{c}nik, P.~Spiga, G.~Verret.
\newblock Bounding the order of the vertex-stabilizer in 3-valent vertex-transitive and 4-valent arc-transitive graphs.
\newblock  {\em arXiv}:1010.2546, 2010.

\bibitem{primozgabriel2}
 P.~Poto\v{c}nik, P.~Spiga, G.~Verret.
\newblock Tetravalent arc-transitive graphs with unbounded vertex-stabilisers,
\newblock  {\em arXiv}:1010.2549, 2010.

\bibitem{pw4census}
P.~Poto\v{c}nik, S.~Wilson.
\newblock {\em A Census of edge-transitive tetravalent graphs}, 
\newblock  {\tt http://jan.ucc.nau.edu/~swilson/C4Site/index.html}

\bibitem{cptwiceprimevalency}
C. ~Praeger, M.~Xu	.
\newblock A characterization of a class of symmetric graphs of twice prime valency.
\newblock  {\em European J. Combin. archive}, Volume 10 Issue 1,
Academic Press Ltd. London, UK, JAN 1989.

\bibitem{Tit54}
J.~Tits.
\newblock Espaces homog\`enes et groupes de {L}ie exceptionnels.
\newblock In {\em Proc. Int. Congr. Math., Amsterdam}, volume~1, pages
  495--496, 1954.

\bibitem{Tit62a}
J.~Tits.
\newblock G\'eom\'etries poly\'edriques et groupes simples.
\newblock {\em Atti della 2a Riunione Groupem. Math. Express. Lat. Firenze},
  pages 66--88, 1963.
  
  \bibitem{Tit74}
J.~Tits.
\newblock Buildings of spherical type and finite {BN}-pairs.
\newblock In {\em Lect.Notes in Math.}, number 386, Springer-Verlag,
  Berlin-Heidelberg-New York, 1974.
  
\bibitem{Tit80}
J.~Tits.
\newblock Buildings and {B}uekenhout geometries
\newblock {\em Finite simple groups {II}}, pages 309--320.
\newblock M.J. Collins, Acad. Press, New York, 1980.

\bibitem{Tut66}
W. T.~Tutte.
\newblock Connectivity in Graphs
\newblock {\em University of Toronto Press}, Toronto, 1966.

\bibitem{Wie64}
H.~Wielandt.
\newblock {\em Finite permutation groups}.
\newblock Translated from the German by R. Bercov. Academic Press, New York,
  1964.

\end{document}